\def\figurescale{0.6}
\newcommand{\Sx}[1]{\mathscr{S}_{#1}}
\newcommand{\Sn}[0]{\Sx{n}}
\newcommand{\M}[0]{\mathscr{M}}
\newcommand{\Mx}[3]{\mathscr{M}_{#1}^{#2}(#3)}
\newcommand{\Desc}[0]{\mathscr{D}}
\newtheorem{theorem}{Theorem}[section]
\newtheorem{corollary}[theorem]{Corollary}
\newtheorem{lemma}[theorem]{Lemma}
\newtheorem{proposition}[theorem]{Proposition}
\theoremstyle{definition}
\newtheorem{definition}[theorem]{Definition}
\theoremstyle{remark}
\newtheoremstyle{case}
	{\topsep}                    % Space above
    {\topsep}                    % Space below
    {\normalfont}                % Body font
    {}                           % Indent amount
    {\itshape}                   % Theorem head font
    {.}                          % Punctuation after theorem head
    {.5em}                       % Space after theorem head
    {}  % Theorem head spec (can be left empty, meaning ‘normal’)
\theoremstyle{case}
\newtheorem{case}{Case}
\title{Major index distribution over permutation classes}
\author{Michal Opler\thanks{Computer Science Institute, Charles University in Prague, \texttt{\href{mailto:opler@iuuk.mff.cuni.cz}{opler@iuuk.mff.cuni.cz}}.}}
\date{\today}
\begin{document}
\maketitle
\begin{abstract}
For a permutation~$\pi$ the major index of~$\pi$ is the sum of all indices~$i$ such that $\pi_i > \pi_{i+1}$.
It is well known that the major index is equidistributed with the number of inversions over all permutations of length~$n$.
In this paper, we study the distribution of the major index over pattern-avoiding permutations of length~$n$.
We focus on the number $M_n^m(\Pi)$ of permutations of length~$n$ with major index~$m$ and avoiding the set of patterns~$\Pi$.

First we are able to show that for a singleton set $\Pi = \{\sigma\}$ other than some trivial cases, the values $M_n^m(\Pi)$ are monotonic in the sense that $M_n^m(\Pi) \leq M_{n+1}^m(\Pi)$.
Our main result is a study of the asymptotic behaviour of~$M_n^m(\Pi)$ as~$n$ goes to infinity.
We prove that for every fixed m and~$\Pi$ and~$n$ large enough, $M_n^m(\Pi)$ is equal to a polynomial in~$n$ and moreover, we are able to determine the degrees of these polynomials for many sets of patterns. 
\end{abstract}

\section{Introduction}
\label{sec:introduction}

Let $\Sn$ be the set of permutations of the letters $\{1,2, \ldots, n\} = [n]$.
We write a permutation~$\pi \in \Sn$ as a sequence $\pi_1 \cdots \pi_n$.
A \textsl{permutation statistic} is a function $st: \Sn \to \mathbb{N}_0$.
For a permutation $\pi$, an \textsl{inversion} is a pair of different indices $i < j$ such that $\pi_i > \pi_j$ and the \textsl{number of inversions} is denoted by $inv(\pi)$.
The number of inversions is the oldest and best-known permutation statistic.
Already in 1838, Stern~\cite{Stern1838} proposed a problem of how many inversions there are in all the permutations of length~$n$.
The distribution of the number of inversions was given shortly after that by Rodrigues~\cite{Rodrigues1839}.

However, we will focus on a different well-known permutation statistic in this paper.
For a permutation~$\pi$, we say that there is a \textsl{descent} on the $i$-th position if $\pi_i > \pi_{i+1}$.
The \textsl{major index} of $\pi$, denoted by $maj(\pi)$, is then the sum of the positions, where the descents occur.
The major index statistic is younger than the number of inversions, as it was first defined by MacMahon~\cite{MacMahon1916} in 1915.
Among other results, MacMahon proved its equidistribution with the number of inversions by showing that their generating functions are equal and started the systematic study of permutation statistics in general.
That is why we call the statistics equidistributed with the number of inversions Mahonian.
Then it took a long time before Foata~\cite{Foata1968} proved the equidistribution by constructing his famous bijection.
Since then many new Mahonian statistics appeared in the literature, most of which are included in the classification given by Babson and Steingrímsson~\cite{Babson2000}.
For the actual values of Mahonian statistics' distribution see the Mahonian numbers sequence A008302~\cite{oeisA008302}.

We say that two sequences $a_1 \cdots a_n$ and $b_1 \cdots b_n$ are \textsl{order-isomorphic} if the permutations required to sort them are the same.
A permutation~$\pi$ \textsl{contains} a pattern~$\sigma$ if there is a subsequence of $\pi_1 \cdots \pi_n$ order-isomorphic to~$\sigma$.
Otherwise we say that~$\pi$ \textsl{avoids} the pattern~$\sigma$.
Pattern avoidance is an active area of research in combinatorics and although the systematic study of pattern avoidance started relatively recently, there is already an extensive amount of literature.
A good illustration of an application of pattern avoidance in computer science is that stack-sortable permutations are exactly the ones avoiding pattern 231, which was proved by Knuth~\cite{Knuth1969}.

Let $\Sn(\sigma)$ be the set of permutations of length~$n$ avoiding~$\sigma$ and $S_n(\sigma)$ its cardinality.
We say that patterns~$\sigma$ and~$\tau$ are \textsl{Wilf-equivalent} if $S_n(\sigma) = S_n(\tau)$ for every~$n$.
For a permutation statistic st, we say that patterns $\sigma$ and $\tau$ are \textsl{st-Wilf-equivalent} if there is a bijection between $\Sn(\sigma)$ and $\Sn(\tau)$ which preserves the statistic st.
This refinement of Wilf equivalence has been extensively studied for short patterns of length 3, see \cite{Bloom2009, Bloom2010, Elizalde2011, Robertson2002}.
An exhaustive classification of Wilf-equivalence and permutation statistics among these patterns was given by Claesson and Kitaev~\cite{Claesson2008}.
On the other hand, not much is known about permutation statistics and patterns of length 4 and greater.
Recently, Dokos et al.~\cite{Dokos2012} presented an in-depth study of major index and number of inversions including st-Wilf-equivalence.
They conjectured maj-Wilf-equivalence between certain patterns of length 4, which was proved by Bloom~\cite{Bloom2014}.
Another conjecture from Dokos et al. concerning maj-Wilf-equivalent patterns of a specific form was partly proved by Ge, Yan and Zhang~\cite{Yan2015}.

Claesson, Jelínek and Steingrímsson~\cite{Jelinek2012} analysed the inversion number distribution over pattern-avoiding classes.
Let $I^k_n(\sigma)$ be the number of $\sigma$-avoiding permutations with length~$n$ and~$k$ inversions.
Claesson et al. studied $I^k_n(\sigma)$ for a fixed~$k$ and a single pattern~$\sigma$ as a function of~$n$.
Our goal is to provide similar analysis for the distribution of major index.

For a pattern~$\sigma,$ let $\M^m_n(\sigma)$ be the set of $\sigma$-avoiding permutations with length~$n$ and major index~$m$, and let $M^m_n(\sigma)$ denote its cardinality.
For a set of patterns~$\Pi$, let $\M^m_n(\Pi) = \bigcap_{\sigma \in \Pi} \M^m_n(\sigma)$ and $M^m_n(\Pi)$ its cardinality.
Claesson et al.~\cite{Jelinek2012} conjectured that $I^k_n(\sigma) \leq I^k_{n+1}(\sigma)$ for every $k$, $n$ unless $\sigma$ is an increasing pattern (i.e. a pattern of the form $1 \cdots l$).
In Section~\ref{sec:fixed-major}, we will prove the analogous claim for major index by constructing an injective mapping $f \colon M^m_n(\sigma) \to M^m_{n+1}(\sigma)$ for every $\sigma \neq 12  \cdots l$.
Furthermore, we show that the claim does not hold in general for an arbitrary set of multiple patterns.

In Section~\ref{sec:asymptotics}, we focus on the asymptotic behaviour of~$M^m_n(\Pi)$ for a fixed~$m$ and~$\Pi$ as $n$ goes to infinity.
We note that the asymptotic behaviour for the number of inversions is known only for sets avoiding a single pattern.
In contrast, our results apply to general (possibly infinite) set of patterns.
It turns out that the values $M^m_n(\Pi)$ are eventually equal to a polynomial in~$n$, which is consistent with the behaviour of $I^k_n(\sigma)$.
The natural question to ask is how the degrees of these polynomials depend on~$\Pi$ and~$m$.

Let $deg(m, \Pi)$ be the degree of the polynomial $P$ such that $P(n) = M^m_n(\Pi)$ for $n \geq n_0$.
Similarly, let $deg_{\text{I}}(k, \sigma)$ be the degree of the polynomial~$P$ such that $P(n) = I^k_n(\sigma)$ for $n \geq n_0$.
In the case of the number of inversions, there are just two types of patterns.
For a pattern $\sigma$, we have either $deg_{\text{I}}(k,\sigma)=k$ for every~$k$, or there is a constant~$c$ such that $deg_{\text{I}}(k,\sigma)=\min(k,c)$.
All these results about $I^k_n(\sigma)$ and $deg_{\text{I}}(m, \sigma)$ were shown in the aforementioned paper by Claesson et al.~\cite{Jelinek2012}.

However, the situation gets more complicated when dealing with major index.
We show how $deg(m, \lbrace \sigma \rbrace)$ depends on the structure of $\sigma$ and determine $deg(m, \Pi)$ for many types of~$\Pi$, including all the cases when~$\Pi$ is a singleton set.
There are still patterns~$\sigma$ for which $deg(m, \lbrace \sigma \rbrace) = m$, but for many patterns $deg(m, \lbrace \sigma \rbrace)$ is a complicated function of~$m$ which tends to infinity slower than linearly (approximately as $\sqrt{m}$).
Note that there are unfortunately sets~$\Pi$ for which we are not able to precisely determine~$deg(m, \Pi)$.
In these cases, our results provide at least an upper bound.

Finally, we conclude Section~\ref{sec:asymptotics} by using our results to show that the asymptotic probability of a random permutation with major index~$m$ avoiding~$\Pi$ is either~0 or~1.
This again corresponds with the number of inversions, where the analogous claim was proved for singleton sets of patterns.

\section{Preliminaries}
\label{sec:basics}
In this section, we recall some standard notions related to permutation patterns and introduce a simple decomposition of permutations.

%\section{Standard notions}

Let $\Sn$ be the set of permutations of the letters $\{1,2, \ldots, n\} = [n]$.
A~permutation $\sigma \in \Sn$ will be represented as a sequence of its values $\sigma = \sigma_1 \sigma_2 \cdots \sigma_n$, where $\sigma_i = \sigma(i)$.
We say that two sequences of integers $a_1 \cdots a_k$ and $b_1 \cdots b_k$ are \textsl{order-isomorphic} if for every $i, j \in [k]$ we have $a_i < a_j \Leftrightarrow  b_i < b_j $.  
For $ I = \{ i_1 < i_2 < \cdots < i_k \} \subseteq [n]$ and $\pi \in \Sn$, let $\pi [I]$ denote the permutation in $\Sx{k}$ which is order-isomorphic to the sequence $\pi_{i_1} \pi_{i_2} \cdots \pi_{i_k}$.
A~permutation $\pi \in \Sn$ \textsl{contains} a~permutation $\sigma \in \Sx{k}$ if there exists an~$I$ such that $\pi [I] = \sigma$.
We write $\sigma \preceq \pi$ to denote this.
If $\pi$ does not contain $\sigma$ we say that $\pi$ \textsl{avoids} $\sigma$.
In this context we usually call~$\sigma$ \textsl{a pattern}.
Similarly, for a set of patterns $\Pi$ we say that a~permutation $\tau$ is $\Pi$-avoiding if it is $\sigma$-avoiding for every $\sigma \in \Pi$. 
For a pattern $\sigma$ let $\Sn(\sigma)$ be the set of all $\sigma$-avoiding permutations of length~$n$, and $S_n(\sigma)$ its cardinality.
More generally for a set of permutations $\Pi$, let $\Sn(\Pi)$ denote the set of all $\Pi$-avoiding permutations of length~$n$, and $S_n(\Pi)$ its cardinality.

The \textsl{descent set} of $\sigma \in \Sn$ is the set $\Desc(\sigma) = \{ i \mid \sigma_i > \sigma_{i+1}\}$ and the \textsl{major index} is the sum of all its members $maj(\sigma) = \sum\nolimits_{i \in \Desc(\sigma)} i$.
We will consider the distribution of major index over pattern-avoiding permutations.

\begin{definition}
\label{def:Mnx}
Let $\Mx{n}{m}{\sigma}$ denote the set of all $\sigma$-avoiding permutations of length~$n$ with major index~$m$, and $M_n^m(\sigma)$ its cardinality.
Similarly let $\Mx{n}{m}{\Pi}$ be the set of all permutations from $\Sn(\Pi)$ with major index $m$, and $M^m_n(\Pi)$ its cardinality.
For an example of the values $M^m_n(\sigma)$ for a specific pattern, see Table~\ref{table:1324perm}.
\end{definition}

\begin{table}[h]
\centering
\begin{tabular}{ c c c c c c c c c c c c c c c } 
   1&    &    &    &    &    &    &    &    &    &    &    &    & \\
   1&   1&    &    &    &    &    &    &    &    &    &    &    & \\
   1&   2&   2&   1&    &    &    &    &    &    &    &    &    &  \\
   1&   3&   4&   6&   5&   3&   1&    &    &    &    &    &    & \\
   1&   4&   6&  12&  16&  19&  16&  15&   9&   4&   1&    &    &  \\
   1&   5&   8&  19&  29&  45&  58&  65&  73&  65&  57&  39&  29&  \ldots\\
   1&   6&  10&  27&  44&  76& 119& 164& 212& 260& 287& 299& 303& \ldots \\
\end{tabular}
\caption{The number of $1324$-avoiding permutations with a fixed major index.
The $m$th entry in the $n$th row is the value $M^m_n(1324)$, with $n$ starting at 1 and $m$ starting at $0$.}
\label{table:1324perm}
\end{table}

Now we will introduce a decomposition which will later prove to be very useful.
Let $\mathbb{N}^d_0$ be the set of $d$-tuples of non-negative integers and for every $a \in \mathbb{N}^d_0$ define its size $|a|=\sum^{d}_{i=1}a_i$.
We will decompose an arbitrary permutation into a smaller permutation and a tuple of non-negative integers.
Let $\pi \in \Sn$ be a permutation and $k$ a natural number, such that the sequence $\pi_{k+1} \cdots \pi_n$ is strictly increasing.
Then we can store the structure of such permutation in a shorter permutation~$\sigma$ order-isomorphic to $\pi_1 \cdots \pi_k$, and a $(k+1)$-tuple which describes the vertical gaps between the letters $\pi_1 \cdots \pi_k$.

\begin{definition}
\label{def:decomposition}
Let $\pi \in \Sn$ be a permutation and $k \in [n]$ such that the sequence $\pi_{k+1} \cdots \pi_n$ is strictly increasing.
Let $\sigma$ be the permutation order-isomorphic to the sequence $\pi_1 \cdots \pi_k$ and $a \in \mathbb{N}^{k+1}_0$ the only $(k+1)$-tuple of size $|a| = n - k$ such that $\pi_i = \sigma_i + \sum^{\sigma_i}_{j=1}a_j$ holds for every $i \in [k]$.
Then we say that $\pi$ can be \textsl{decomposed} into $\sigma$ and $a$, denoted by $\pi = \sigma \cdot a$.
\end{definition}

We can also look at the decomposition from the other side as an operation, which increases the vertical gaps between the letters of $\sigma$ and then fills them with increasing suffix.
See Figure~\ref{fig:decomposition}.

\begin{figure}[h!]
\centering
\includegraphics[scale=\figurescale]{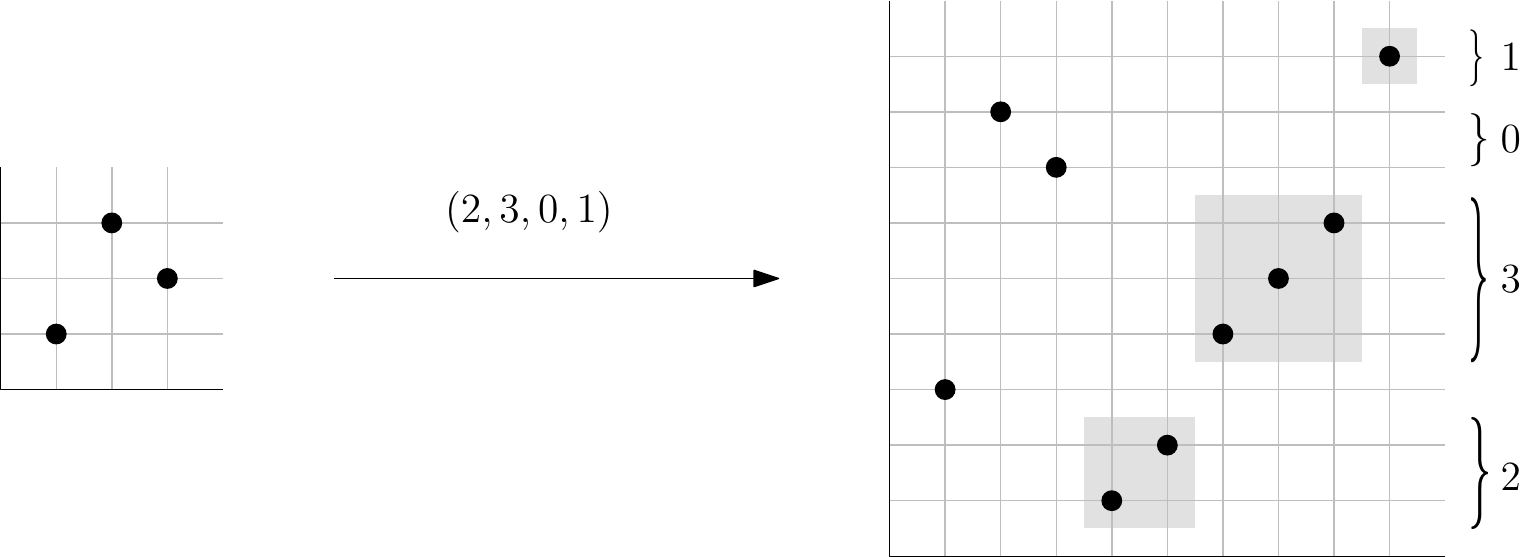}
\caption{For a permutation $\sigma = 132$ and 4-tuple $a = (2,3,0,1)$ we have $\pi = \sigma \cdot a = 387124569$.}
\label{fig:decomposition}
\end{figure}

\begin{definition}
\label{def:core}
For a permutation $\pi$ that can be expressed as $\pi = \gamma \cdot a$ for some $\gamma \in \Sx{k}$ and  $a \in \mathbb{N}^{k+1}_0$, we call $\gamma$ \textsl{the core of~$\pi$} and~$a$ \textsl{the padding profile of~$\pi$} if~$k$ is the last descent of~$\pi$.
In other words, $\pi = \gamma \cdot a$ is a decomposition into a core and a padding profile of~$\pi$ if there is $i \leq \gamma_k$ such that $a_i > 0$.
For~$\pi = 12 \cdots n$, the core of~$\pi$ is the empty permutation and its padding profile is $a \in \mathbb{N}_0$ equal to the length of~$\pi$.
\end{definition} 

Observe that the major index of a permutation~$\pi$ is determined only by its core.
Therefore, let us define the following statistic which characterizes the cores of permutations with a given major index.

\begin{definition}
\label{def:maj+}
For a permutation $\pi$, let the \textsl{extended major index of~$\pi$}, denoted by $maj^+(\pi)$, be the sum of its major index and its length, i.e.,
$$maj^+(\pi) = |\pi| + maj(\pi).$$
\end{definition}

For every permutation~$\pi$ with a core~$\gamma$, we have $maj(\pi) = maj^+(\gamma)$.
Notice also that for any $\pi$, if $\pi$ contains $\sigma$ then $maj^+(\pi)\geq maj^+(\sigma)$.

\section{Monotonicity of columns}
\label{sec:fixed-major}

In this section, we will focus on the distribution of major index over permutations avoiding a single pattern.
Observe that each column of Table~\ref{table:1324perm} is weakly increasing from top to bottom.
In other words, for a fixed major index~$m$ the number of $1324$-avoiding permutations of length $n+1$ is at least the number of $1324$-avoiding permutations of length~$n$.
We will show that this claim holds in general for any single pattern~$\sigma$ except for the increasing patterns (i.e., the patterns of the form~$12 \cdots k$).

%\section{Inserting an element}

First let us define a simple operation of inserting an element into a permutation.
Later we will prove two elementary properties of this operation.

\begin{definition}
\label{def:insert}
For a permutation $\pi \in \Sn$ and $k,l \in [n+1]$, let $\pi[k \to l] \in \Sx{n+1}$ be a permutation created by inserting the letter $l$ at the $k$-th position. In other words $\pi[k \to l]$ is the permutation order-isomorphic to the sequence $\pi_1 \cdots \pi_{k-1} \left(l - \frac{1}{2}\right) \pi_{k} \cdots \pi_{n}$.
See Figure~\ref{fig:insertion}.
\end{definition} 

\begin{figure}[!h]
\centering
\includegraphics[scale=\figurescale]{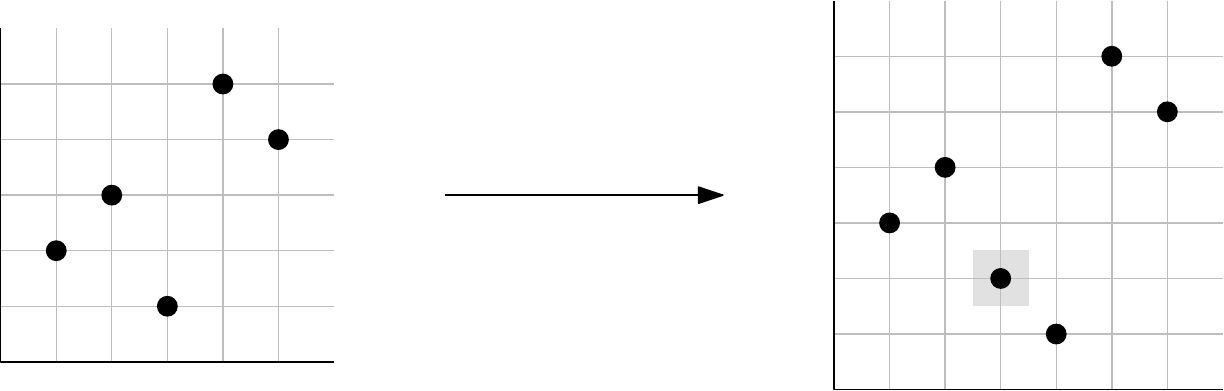}
\caption{Example of an insertion $23154[3 \to 2] = 342165$}
\label{fig:insertion}
\end{figure}

\begin{lemma}
\label{lemma:patterninsert}
Let $n \in \mathbb{N}$, $k, l \in [n + 1]$ and $\pi \in \Sn(\sigma)$. If there is $I$ such that $\pi[k \to l][I] = \sigma$, then $k \in I$.
\end{lemma}
\begin{proof}
Suppose that we have $I = \{i_1 < \cdots < i_m\}$ such that $k \not \in I$.
Let $J = \{j_1 < \cdots < j_m\}$ be a subset of $[n]$ defined by
\[j_t = 
\begin{cases} 
i_t     \quad&\mbox{if } i_t < k \\ 
i_t - 1 &\mbox{if } i_t > k \, .
\end{cases}\]
Since $\pi[k \to l]$ restricted to indices other than $k$ is order-isomorphic to $\pi$ we obtain $\sigma = \pi[k \to l][I] = \pi[J]$, contradicting the fact that $\pi$ avoids $\sigma$.
\end{proof}

\begin{lemma}
\label{lemma:desciso}
Let $n \in \mathbb{N}$, $k, l \in [n]$ and $\pi \in \Sn$. If $\Desc(\pi) \subseteq [k - 1]$, $l \leq \pi_k$ and either $k = 1$ or the sequences $\pi_{k-1} (l - \frac{1}{2})$ and $\pi_{k-1} \pi_{k}$ are order-isomorphic, then $\Desc(\pi) =\nobreak \Desc(\pi[k \to l])$.
\end{lemma}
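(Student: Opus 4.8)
The plan is to compare $\Desc(\pi)$ and $\Desc(\pi[k\to l])$ position by position. Write $\tau := \pi[k\to l]$, so $|\tau| = n+1$ and, by Definition~\ref{def:insert}, $\tau$ is order-isomorphic to the sequence $\pi_1\cdots\pi_{k-1}\,(l-\tfrac12)\,\pi_k\cdots\pi_n$. Since order-isomorphism preserves every pairwise comparison, it suffices to reason about this displayed sequence: the entry in position $i$ behaves like $\pi_i$ for $i<k$, like $l-\tfrac12$ for $i=k$, and like $\pi_{i-1}$ for $i>k$. The hypothesis $\Desc(\pi)\subseteq[k-1]$ will be used in the form ``the suffix $\pi_k\pi_{k+1}\cdots\pi_n$ is strictly increasing,'' which kills any potential descent of $\tau$ to the right of position $k-1$.

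Concretely I would split into four ranges of positions $i$ of $\tau$. For $1\le i\le k-2$, the comparison of positions $i$ and $i+1$ in $\tau$ is exactly the comparison $\pi_i$ versus $\pi_{i+1}$, so $i\in\Desc(\tau)\iff i\in\Desc(\pi)$. For $i=k-1$ (relevant only when $k\ge 2$), the comparison is $\pi_{k-1}$ versus $l-\tfrac12$; the hypothesis that $\pi_{k-1}(l-\tfrac12)$ and $\pi_{k-1}\pi_k$ are order-isomorphic is precisely the statement $\pi_{k-1}>l-\tfrac12 \iff \pi_{k-1}>\pi_k$, hence $k-1\in\Desc(\tau)\iff k-1\in\Desc(\pi)$. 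For $i=k$, the comparison is $l-\tfrac12$ versus $\pi_k$; since $l\le\pi_k$ forces $l-\tfrac12<\pi_k$, position $k$ is not a descent of $\tau$, and it is not one of $\pi$ either because $\Desc(\pi)\subseteq[k-1]$. Finally, for $k+1\le i\le n$, the comparison is $\pi_{i-1}$ versus $\pi_i$ with $i-1\ge k>k-1$, so $i-1\notin\Desc(\pi)$, giving $\pi_{i-1}<\pi_i$ and $i\notin\Desc(\tau)$; likewise $i\notin\Desc(\pi)$. Combining the four ranges yields $\Desc(\tau)=\Desc(\pi)$.

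The argument is essentially bookkeeping, so I do not expect a genuine obstacle; the only points where I would be careful are (i) keeping the index shift straight when passing from $\pi$ to $\tau$, since every position $\ge k$ moves up by one, and (ii) using that $l-\tfrac12$ is never an integer, so the weak inequality $l\le\pi_k$ already delivers the strict inequality $l-\tfrac12<\pi_k$ needed to exclude a descent at position $k$. No deeper idea appears to be required.
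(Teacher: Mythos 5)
Your proof is correct and follows essentially the same route as the paper's: both exploit that $\pi[k\to l]$ restricted to the positions other than $k$ is order-isomorphic to $\pi$, use $\Desc(\pi)\subseteq[k-1]$ to rule out descents at positions $\ge k+1$, use $l\le\pi_k$ to rule out a descent at position $k$, and use the order-isomorphism hypothesis to handle position $k-1$. Your version just spells out the index bookkeeping in four explicit ranges where the paper compresses it.
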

\begin{proof}
As before $\pi[k \to l]$ restricted to indices other than $k$ is order-isomorphic to $\pi$.
Therefore for every index $i < k - 1$, $i \in \Desc(\pi)$ if and only if $i \in \Desc(\pi[k \to l])$.
And since we know that all the elements of $\Desc(\pi)$ are smaller than~$k$, we get $\Desc(\pi[k \to l]) \subseteq [k]$.
We are left with the two indices $k$ and $k-1$.
Observe that $k \not \in \Desc(\pi[k \to l])$ because $l \leq \pi_k$.
And from the last condition we obtain $k - 1 \in \Desc(\pi)$ if and only if $k - 1 \in \Desc(\pi[k \to l])$.
\end{proof}

%Now we can prove the main result of this chapter about the permutation classes avoiding a single pattern.

\begin{theorem}
\label{thm:monotonity}
For every $n, m, k \in \mathbb{N} $ and $\sigma \in \Sx{k}$ with $\Desc(\sigma) \neq \emptyset$ we have the inequality $M_n^m (\sigma) \le M_{n+1}^m(\sigma)$.
\end{theorem}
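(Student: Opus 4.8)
The goal is to construct an injection $f \colon \Mx{n}{m}{\sigma} \to \Mx{n+1}{m}{\sigma}$ whenever $\sigma$ has at least one descent. The natural idea, given the insertion operation $\pi[k \to l]$ and the two lemmas already proved, is to insert a single new letter into $\pi$ in a way that (i) preserves the descent set — hence the major index — and (ii) preserves $\sigma$-avoidance. Lemma~\ref{lemma:desciso} tells us exactly where we are allowed to insert without disturbing the descent set: we must insert at some position $k$ with $\Desc(\pi) \subseteq [k-1]$ (i.e.\ at or past the last descent), with a value $l \leq \pi_k$, and with the compatibility condition on $\pi_{k-1}$. The cleanest choice is to work at the last descent of $\pi$: using the core/padding-profile decomposition, write $\pi = \gamma \cdot a$ where $k$ is the last descent position, and insert the new letter immediately after position $k$ with value $\pi_k$ (that is, $f(\pi) = \pi[k+1 \to \pi_k]$, duplicating the value $\pi_k$ just to its right). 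This creates no new descent at position $k+1$ since the inserted letter sits just below $\pi_k$, it creates no descent at position $k$ since $\pi_{k+1} > \pi_k$ (positions past the last descent are increasing), and all earlier descents are untouched; so $maj$ is preserved.

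The next step is to verify $\sigma$-avoidance of $f(\pi)$. By Lemma~\ref{lemma:patterninsert}, any occurrence of $\sigma$ in $f(\pi)$ must use the newly inserted position. But the inserted letter and the letter $\pi_k$ at position $k$ form an ascent with equal-ish values and, crucially, the inserted letter is order-isomorphic to $\pi_k$ relative to everything to its right, and sits just below $\pi_k$ relative to everything to its left; so in any occurrence of $\sigma$ using the new position, we can replace the new position by position $k$ (if $k$ is not already used) and obtain an occurrence of $\sigma$ in $\pi$ — unless both the new position and position $k$ are used, in which case $\sigma$ would have two adjacent equal-pattern entries, impossible. This contradicts $\pi \in \Sn(\sigma)$. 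One has to be a little careful about the degenerate case $\pi = 12\cdots n$, but that is exactly excluded: if $\sigma$ has a descent then every $\sigma$-avoiding $\pi$ of length $\geq k$ could be the identity, and $maj(\mathrm{id}) = 0$; but we only need $f$ defined on $\Mx{n}{m}{\sigma}$, and for $m=0$ the only permutation is the identity — here we should instead insert the new maximal element $n+1$ at the very front of $\pi$ when $\pi$ is increasing, or handle $m=0$ separately since $M_n^0(\sigma) = 1 = M_{n+1}^0(\sigma)$ trivially. Actually the slicker route: when $\pi$ has no descent (so $m=0$), map it to the identity of length $n+1$; when $\pi$ has a descent, use the duplication construction above.

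Finally, I need injectivity of $f$. This is where a bit of care is required: given $f(\pi)$, I must recover $\pi$. The point is that $f(\pi)$ has a repeated-pattern adjacent pair precisely at the position pair $(k, k+1)$ where $k$ is the last descent of $\pi$ — i.e.\ $f(\pi)$ restricted to positions $k,k+1$ is an ascent by exactly one unit with nothing strictly between the two values, and moreover this is the rightmost such "insertable" spot that lies immediately after a descent. Deleting the letter at position $k+1$ of $f(\pi)$ returns $\pi$. To make this reversible I should identify an intrinsic marker in $f(\pi)$: the last descent of $f(\pi)$ is still at position $k$ (it was not destroyed and no later descent was created), and the letter at position $k+1$ is the unique letter whose value is $\pi_k + $ (whatever shift), sitting just above the eventual gap; concretely, the inverse takes $f(\pi)$, locates its last descent at position $k$, and deletes the letter at position $k+1$, which is forced to be the one order-isomorphic-immediately-below the letter at $k$. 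I expect the main obstacle to be nailing down this inverse cleanly — specifically, arguing that position $k+1$ in $f(\pi)$ is canonically determined (as the value directly below $f(\pi)_k$ among the letters to the right, which exists and is unique by construction) so that distinct $\pi$ give distinct $f(\pi)$. Given Lemmas~\ref{lemma:patterninsert} and \ref{lemma:desciso} and the core decomposition of Definition~\ref{def:core}, the descent-set and avoidance parts are routine; the bookkeeping for injectivity is the delicate part.
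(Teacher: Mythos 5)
Your central construction does not preserve the major index, so the map you define does not land in $\Mx{n+1}{m}{\sigma}$. You take $k$ to be the \emph{last descent} of $\pi$ and set $f(\pi)=\pi[k+1\to\pi_k]$, inserting a letter of value just below $\pi_k$ at position $k+1$. But $k$ being a descent means $\pi_k>\pi_{k+1}$, i.e.\ $\pi_{k+1}\le\pi_k-1$, so the inserted letter (value $\pi_k-\tfrac12$) is still larger than the letter $\pi_{k+1}$ that now follows it. Hence $\Desc(f(\pi))=\Desc(\pi)\cup\{k+1\}$ and $maj(f(\pi))=maj(\pi)+k+1$. Your justification that ``it creates no descent at position $k$ since $\pi_{k+1}>\pi_k$'' inverts the defining inequality of a descent; it is exactly false at the last descent. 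Correspondingly, Lemma~\ref{lemma:desciso} does not apply to your insertion: it requires the inserted value $l$ to satisfy $l\le\pi_{k+1}$ (the letter currently at the insertion position), and you have $l=\pi_k>\pi_{k+1}$.

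There is a second, independent problem that would remain even after repairing the insertion (say, by duplicating a letter of the increasing suffix instead): a single insertion rule that does not depend on $\sigma$ cannot support the avoidance argument. In the subcase where the occurrence of $\sigma$ uses both the new position and its neighbour, you claim $\sigma$ would need ``two adjacent equal-pattern entries, impossible''---but two adjacent letters with consecutive values simply realize a pair $\sigma_j,\sigma_{j+1}$ of consecutive values at consecutive indices of $\sigma$, which many patterns have; there is no contradiction. The paper's proof resolves this by branching on $tail(\sigma)$, the number of trailing fixed points of $\sigma$: if $tail(\sigma)=0$ one appends $n+1$ at the end; if $0<tail(\sigma)\le slope(\pi)$ one duplicates the suffix element at depth $tail(\sigma)$ and uses the fixed-point structure of $\sigma$'s tail to relocate or kill the putative occurrence; and if $tail(\sigma)>slope(\pi)$ one inserts a new minimum just after the last descent. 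The hypothesis $\Desc(\sigma)\neq\emptyset$ is used precisely in that last case. Your proposal is missing this case analysis, and as written the main case is not a map into $\Mx{n+1}{m}{\sigma}$ at all.
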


\begin{proof}
To prove this theorem we will construct an injective mapping~$f$ from $\Mx{n}{m}{\sigma}$ to $\Mx{n+1}{m}{\sigma}$.
In order to find an image for $\pi \in \Mx{n}{m}{\sigma}$ we introduce the following permutation statistics.

\begin{definition}
\label{def:tail}
For $\sigma \in \Sn$ let $tail(\sigma)$ denote the largest $i$ such that $\sigma_{n + 1 -i}$ $\sigma_{n + 2 -i} \cdots \sigma_n$ are all fixed points. 
And similarly let $slope(\sigma)$  be the largest $i$ such that the sequence $\sigma_{n + 1 -i}$ $\sigma_{n + 2 -i} \cdots \sigma_n$ is strictly increasing.
Recalling Definition~\ref{def:core}, we see that $slope(\sigma)$ is the size of the padding profile of~$\sigma$ and $tail(\sigma)$ is the value of its last coordinate.
See Figure~\ref{fig:tail}. 
\end{definition} 

\begin{figure}[h!]
\centering
\includegraphics[scale=\figurescale]{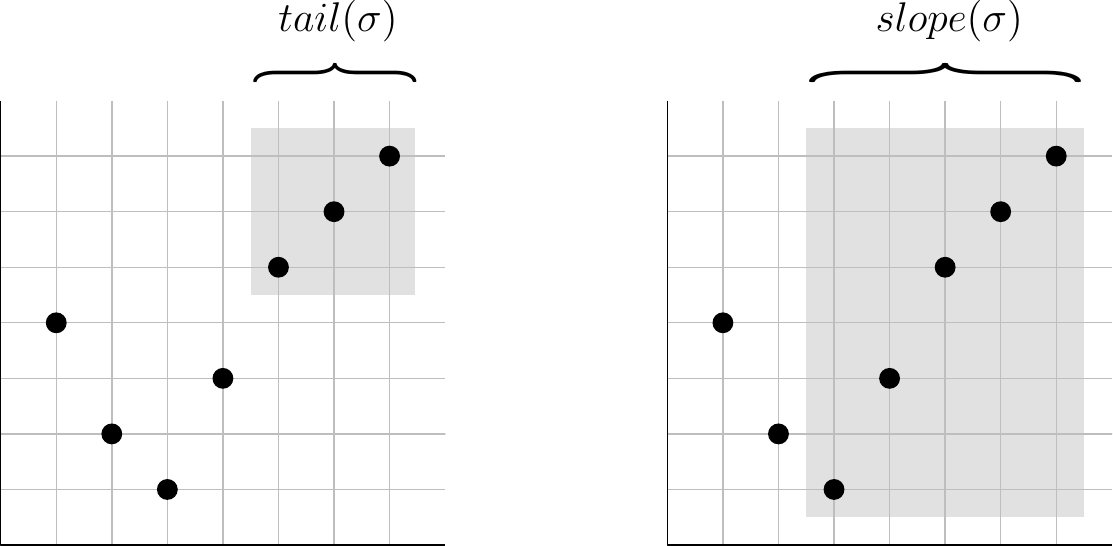}
\caption{The tail and slope statistics of the permutation $\sigma = 4213567$}
\label{fig:tail}
\end{figure}

\begin{case}
\label{monocs1}
First we solve the easy case where $tail(\sigma) = 0$.
We simply extend $\pi$ by inserting the letter $n + 1$ at the end, i.e.,
\[f(\pi) = \pi[n + 1 \to n + 1].\]
It is clear that $f$ preserves the descent set, which implies $maj(\pi) = maj(f(\pi))$.
Now suppose there is $I = \{i_1 < \cdots < i_k\}$ such that $f(\pi)[I] = \sigma$.
Lemma~\ref{lemma:patterninsert}  implies $i_k = n + 1$.
But that would lead to $\sigma_k = f(\pi)[I]_k = k$ which contradicts the assumption that $tail(\sigma) = 0$.
\end{case}

\begin{case}
\label{monocs2}
Suppose now that $tail(\sigma) \neq 0$ and $slope(\pi) \geq tail(\sigma)$.
Then we create the image of $\pi$ by expanding the element at the position $n + 1 -tail(\sigma)$ into two.
See Figure~\ref{fig:case2}.
\[f(\pi) = \pi[t \to \pi_t] \quad \text{where $t = n + 1 - tail(\sigma)$} .\]

\begin{figure}[h!]
\centering
\includegraphics[scale=\figurescale]{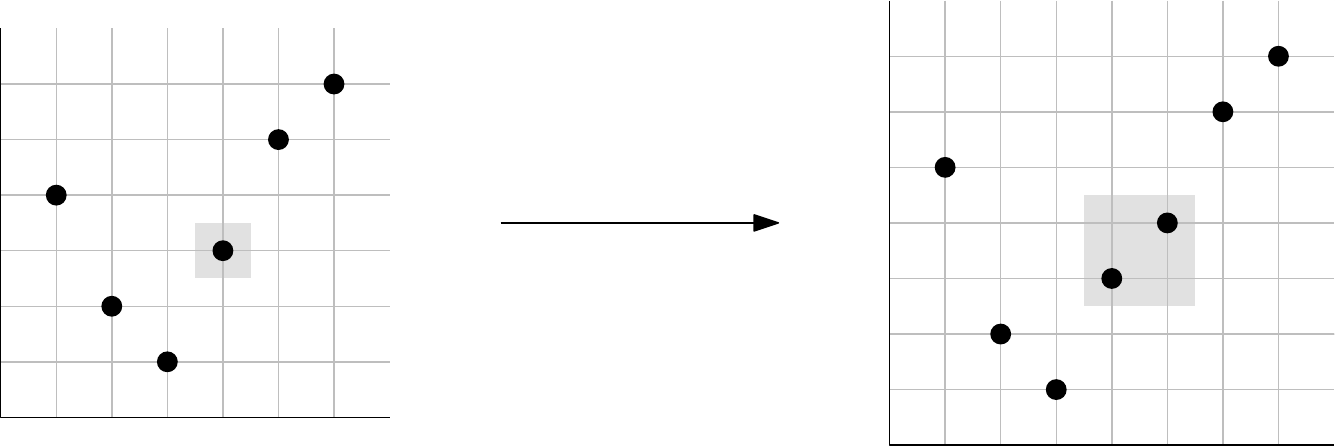}
\caption{Example of a construction from Case~\ref{monocs2}. 
Consider a permutation $\pi = 421356$, which has $slope(\pi) = 4$, and suppose we have a pattern $\sigma$ with $tail(\sigma) = 3$.
Then $f(\pi) = 5213467$.
}
\label{fig:case2}
\end{figure}

Because all the conditions from Lemma~\ref{lemma:desciso} are met, we get $\Desc(\pi) = \Desc(f(\pi))$ which implies $maj(\pi) = maj(f(\pi))$.

Next we want to show that $f(\pi)$ avoids~$\sigma$.
Suppose there is $I = \{i_1 < \cdots < i_k\}$ such that $f(\pi)[I] = \sigma$. 
Again from Lemma~\ref{lemma:patterninsert} we obtain $t = i_j \in I$ for some $j$.
Observe that since there are only $tail(\sigma)$ indices in $f(\pi)$ larger than~$t$, we get a lower bound $j \geq k - tail(\sigma)$.

Now we will use different arguments depending on whether this holds as an equality or not.
First suppose that $j = k - tail(\sigma)$.
This means that~$I$ also contains all the indices larger than~$j$, in particular $i_{j+1} = t  + 1$.
Following Definition~\ref{def:tail}, $j$ is then the largest index such that $\sigma_j \neq j$, implying $\sigma_j < j$.
This means there is a letter $\sigma_l$ to the left of $\sigma_j$ such that $\sigma_l > \sigma_j$ and all the letters to the right of $\sigma_j$ are larger than $\sigma_l$.
Therefore, looking at the indices $j$, $j+1$ and $l$ we have $\sigma_j < \sigma_l < \sigma_{j+1}$ and the same inequality goes for $f(\pi)[I]$.
Translated to the indices of $f(\pi)$ the inequality $f(\pi)_{t} < f(\pi)_{i_l} < f(\pi)_{t+1}$ must hold.
But recalling the definition of $f(\pi)$ there is no index $p$ such that $f(\pi)_{t} < f(\pi)_{p} < f(\pi)_{t+1} $. 

Suppose now that $j > k - tail(\sigma)$.
In this case there must be $l > j$ such that $l \not \in I$.
We aim to show that $J = \{j_1 < \cdots < j_k\} = I \setminus \{t\} \cup \{l\}$ satisfies $f(\pi)[J] = f(\pi)[I] = \sigma$, which would lead to a contradiction since~$\pi$ contains $f(\pi)[J]$. 

We know that  $\sigma_j \sigma_{j+1} \cdots \sigma_k$ are all fixed points following Definition~\ref{def:tail}.
Observe that for every index $p$ we have the inequality $j_p \geq i_p$ with equality on the indices smaller than~$j$.
Therefore, $f(\pi)[I]$ and $f(\pi)[J]$ restricted to the first $j-1$ letters are order-isomorphic.
The only thing left is to check that the other letters of $f(\pi)[J]$ are fixed points.
The sequence $f(\pi)_t \cdots f(\pi)_{n+1}$ is increasing, thus its subsequence $f(\pi)_{j_j}$ $f(\pi)_{j_{j+1}} \cdots f(\pi)_{j_k}$ is also increasing and moreover $f(\pi)_{j_j} > f(\pi)_{i_j}$.
Then $f[J]_j$ and subsequently all the succeeding letters of $f[J]$ must be fixed points.
Together, this means that indeed $f(\pi)[I] = f(\pi)[J]$. 
\end{case}

\begin{case}
\label{monocs3}
Finally, suppose that $tail(\sigma) \neq 0$ and $slope(\pi) < tail(\sigma)$.
Then we simply insert the letter~$1$ at the rightmost possible position without creating a new descent.
See Figure~\ref{fig:case3}.
\[f(\pi) = \pi[n + 1 - slope(\pi) \to 1].\]

\begin{figure}[h!]
\centering
\includegraphics[scale=\figurescale]{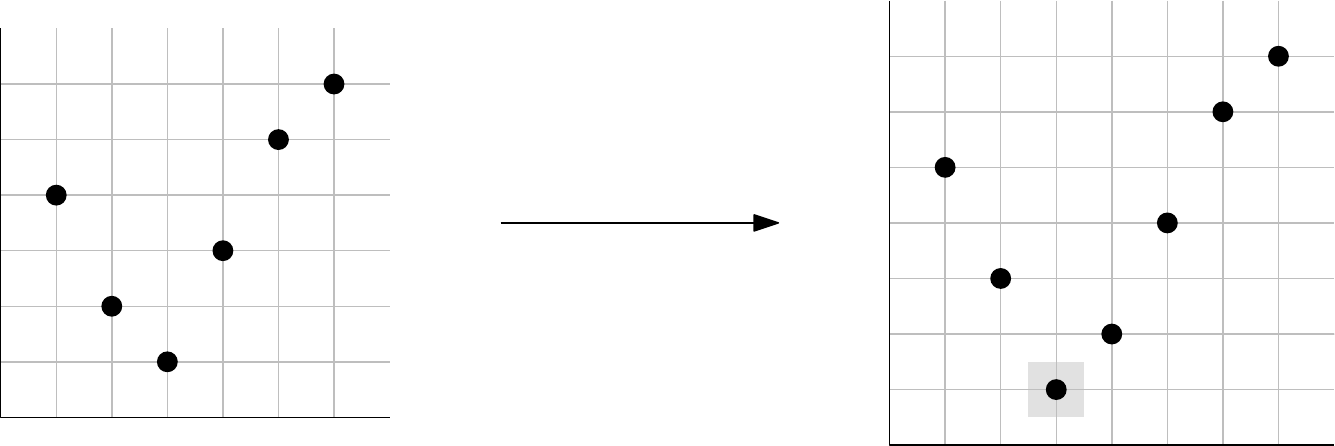}
\caption{Example of a construction from Case~\ref{monocs3}. 
Consider a permutation $\pi = 421356$, which has $slope(\pi) = 4$, and suppose we have a pattern $\sigma$ with $tail(\sigma) > 4$.
Then $f(\pi) = 5312467$.
}
\label{fig:case3}
\end{figure}

As before, we obtain $maj(\pi) = maj(f(\pi))$ from Lemma~\ref{lemma:desciso}.
If there is $I = \{i_1 < \cdots < i_k\}$ such that $f(\pi)[I] = \sigma$, then Lemma~\ref{lemma:patterninsert} implies $n + 1 - slope(\pi) = i_j$ for some~$j$.
The $j$-th letter of~$\sigma$ must be its minimum since $f(\pi)_{i_j} = 1$ is the minimum of $f(\pi)$.
On the other hand, because $n + 1 - slope(\pi) > n + 1 - tail(\sigma)$ and $\Desc(\sigma) \neq \emptyset$, there must be~$q$ such that $\sigma_q < \sigma_j$, which yields a contradiction.
\end{case}

The only remaining part is to show that~$f$ is injective.
Suppose there are $\pi_1 \neq \pi_2$ such that $f(\pi_1) = f(\pi_2)$.
From the properties of $f(\pi_1)$ we can tell unambiguously whether it was obtained through Case~\ref{monocs1},~\ref{monocs2} or~\ref{monocs3}.
And following the definitions of~$f$ in these particular cases it is clear that necessarily $\pi_1 = \pi_2$. 
\end{proof}

In Theorem~\ref{thm:monotonity}, the assumption $\Desc(\sigma)\neq\emptyset$ is necessary, because in the case of a pattern $\sigma = 12 \cdots k$ and fixed $m \in \mathbb{N}$ there is $n_0 \in \mathbb{N}$ such that for every~$n$ larger than~$n_0$ we have $M_n^m (\sigma) = 0$.
This follows directly from the Erdős–Szekeres theorem~\cite{Erdos1935}, which states that any permutation of length $n > m(k-1)+1$ contains either the increasing pattern of length~$k$ or the decreasing pattern of length $m + 1$, forcing the major index to be larger than~$m$.

Applying a similar argument as in the proof of Theorem~\ref{thm:monotonity}, we could show that $M_n^m(\Pi) \leq M_{n+1}^m(\Pi)$ for  any set of patterns with the same tail which does not contain any increasing pattern.
One could think that indeed for any set of patterns the columns are either eventually zero or weakly increasing.
But this is not true even for small sets of short patterns.
For example, consider a set $\Pi = \{3412, 1324\}$ of just two patterns.
In this case $M^5_6(\Pi) = M^5_8(\Pi) = 21$ and $M^5_7(\Pi) = 20$ (see Table~\ref{table:badtable}).
But we can easily show that $M^m_n(\Pi)$ tend to infinity for $m \geq 3$.
Let $\pi(n) = 12 \cdots (n-2) [m-1 \to 1]$ and $\pi(n,k) = \pi(n)[1 \to k]$, then $\pi(n,k) \in M^m_{n}(\Pi)$ for $n \geq m$ and $k > 2$. 
Therefore, $M^m_n(\Pi)$ tends to infinity because there are linearly many $\pi(n,k)$ for a fixed~$n$.

\begin{table}[h!]
\centering
\begin{tabular}{ c c c c c c c c c c c c c c c } 
    &    &    &    &    &    &    &    &    &    &    &    &    &    \\
   1&    &    &    &    &    &    &    &    &    &    &    &    &    \\
   1&   1&    &    &    &    &    &    &    &    &    &    &    &    \\
   1&   2&   2&   1&    &    &    &    &    &    &    &    &    &    \\
   1&   3&   3&   6&   5&   3&   1&    &    &    &    &    &    &    \\
   1&   4&   3&   9&  12&  16&  12&  15&   9&   4&   1&    &    &    \\
   1&   5&   3&  13&  12&  \cellcolor[HTML]{E1E1E1} 21&  38&  31&  48&  41&  44&  29&  29&\ldots\\
   1&   6&   3&  18&  13&  \cellcolor[HTML]{E1E1E1} 20&  49&  62&  63& 105&  95& 109& 162&\ldots\\
   1&   7&   3&  24&  14&  \cellcolor[HTML]{E1E1E1} 21&  62&  62& 105& 105& 221& 169& 222& \ldots\\
\end{tabular}
\caption{The number of permutations avoiding $\Pi = \{3412, 1324\}$ with a fixed major index.
The $m$th entry in the $n$th row is the value $M^m_n(\Pi)$, with $n$ starting at 1 and $m$ starting at $0$.
The problematic values are highlighted.}
\label{table:badtable}
\end{table}

\section{Asymptotic behaviour}
\label{sec:asymptotics}

We have seen that for most single patterns the inequality $M^m_n(\sigma) \leq M^m_{n+1}(\sigma)$ holds (recall Theorem~\ref{thm:monotonity}).
Let us now focus on the asymptotic behaviour of $M^m_n(\sigma)$ for a fixed~$m$ as~$n$ tends to infinity.
More generally, we are interested in the asymptotic behaviour of $M^m_n(\Pi)$ for a (possibly infinite) set of permutations~$\Pi$.
Our first goal is to show that for a fixed~$m$ and arbitrary $\Pi$, $M_n^m(\Pi)$ is eventually equal to a polynomial.

%\section{Polynomial growth}

By recalling Definition~\ref{def:core}, observe that a permutation is uniquely determined by its core and its padding profile while its major index is determined only by the core.
Furthermore, for any permutation $\tau \in \Mx{n}{m}{\Pi}$ all the elements of $\Desc(\tau)$ are smaller than $m+1$, thus making the core of any such permutation shorter than $m+1$.
This means that all the permutations with major index~$m$ have only a finite number of unique cores.

\begin{definition}
\label{def:setofcores}
Let $C(m, \Pi)$ denote the finite set of all the distinct cores of permutations from $\M ^m (\Pi)$, where $\M ^m (\Pi)$ the set of all $\Pi$-avoiding permutations with major index~$m$, i.e. $\M^m (\Pi) = \bigcup_{n \geq 1} \M^m_n (\Pi)$.
\end{definition}

Note that every core $\gamma \in C(m, \Pi)$ satisfies $maj^+(\gamma) = m$ (recall Definition~\ref{def:maj+}).
Therefore the permutation statistic which assigns to each permutation its core, is in fact a refinement of the major index.

\begin{definition}
\label{def:fixedcore}
For $\gamma\in C(m,\Pi)$, let $\M^{[\gamma]}_n(\Pi)$ be the set of permutations from $\M^m_n(\Pi)$ which have the core $\gamma$, and let $M^{[\gamma]}_n(\Pi)$ be its cardinality.
\end{definition}

Clearly $M^m_n(\Pi) = \sum_{\gamma \in C(m, \Pi)} M^{[\gamma]}_n(\Pi)$.
This means that in order to prove the polynomial behaviour of $M^m_n(\Pi)$ for a fixed~$m$, it is enough to prove the polynomial behaviour of $M^{[\gamma]}_n(\Pi)$ for a fixed core~$\gamma$.
And because the decomposition of a permutation into its core and its padding profile is unique, we can enumerate $\M^{[\gamma]}_n (\Pi)$ by counting all the possible padding profiles.

\begin{lemma}
\label{lemma:polynomialcore}
Let~$\Pi$ be any set of permutations and $\gamma \in \Sx{k}$ a permutation.
Then there exists a polynomial~$P$ and an integer~$n_0$ such that for every $n \geq n_0$, $M^{[\gamma]}_n(\Pi) = P(n)$.
\end{lemma}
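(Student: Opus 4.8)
The plan is to fix the core $\gamma \in \Sx{k}$ and enumerate $\M^{[\gamma]}_n(\Pi)$ by counting the admissible padding profiles. Recall that a permutation $\pi \in \M^{[\gamma]}_n(\Pi)$ is exactly a permutation of the form $\pi = \gamma \cdot a$ where $a \in \mathbb{N}_0^{k+1}$ has size $|a| = n - k$, subject to two constraints: first, that $\gamma \cdot a$ really has $\gamma$ as its \emph{core} (i.e. the last descent is not pushed to the right, which by Definition~\ref{def:core} means $a_i > 0$ for some $i \leq \gamma_k$ — call this the ``core condition''); and second, that $\gamma \cdot a$ avoids every pattern in $\Pi$. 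So the whole question reduces to: how many tuples $a \in \mathbb{N}_0^{k+1}$ of size $n-k$ satisfy these two conditions? I would like to show this count is eventually polynomial in $n$.

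The key structural observation I would establish is a \emph{monotonicity / saturation} property of the $\Pi$-avoidance condition in terms of the padding profile. Concretely: if $\gamma \cdot a$ contains some pattern $\sigma \in \Pi$, and $a'$ is obtained from $a$ by increasing one coordinate, does $\gamma \cdot a'$ still contain $\sigma$? This is not literally true for every coordinate (increasing the last coordinate just lengthens an increasing suffix and cannot destroy a pattern occurrence, but increasing an interior coordinate could in principle). However, what \emph{is} true and suffices is the following: whether $\gamma \cdot a$ avoids $\Pi$ depends only on which coordinates of $a$ are zero, which are ``small'' (below some threshold depending only on $\gamma$ and $\Pi$), and — for the large coordinates — it does not depend on their exact values at all, only on the fact that they exceed the threshold. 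The reason is that any pattern occurrence uses at most $|\sigma| \le$ (max pattern length, which we may bound since only patterns of bounded length can embed into something with core $\gamma$) letters, so it only ``sees'' a bounded number of distinct gap-sizes; once a gap is large enough, making it larger cannot create or destroy an embedding. I would make this precise by showing: there is a threshold $T = T(\gamma, \Pi)$ so that for tuples $a, a'$ with $a_i = a'_i$ whenever $\min(a_i, a'_i) < T$ and $a_i \ge T \Leftrightarrow a'_i \ge T$, the permutation $\gamma \cdot a$ avoids $\Pi$ iff $\gamma \cdot a'$ does (and similarly the core condition only depends on which of $a_1, \dots, a_{\gamma_k}$ are zero).

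Granting that, the counting finishes cleanly. I would stratify tuples of size $n-k$ by their ``type'': the pattern of which coordinates lie in $\{0, 1, \dots, T-1\}$ together with their exact values there, and which coordinates are $\ge T$ (a subset $S \subseteq [k+1]$). For each type, the $\Pi$-avoidance and core conditions are either uniformly satisfied or uniformly violated, so I only need to count, for each \emph{good} type, the number of tuples realizing it with total size $n-k$. Fixing the small coordinates' values contributes a fixed residual sum $s$ to distribute among the $|S|$ large coordinates, each required to be at least $T$; the number of ways is $\binom{(n-k-s-|S|T) + |S|-1}{|S|-1}$, which for $n$ large is a polynomial in $n$ of degree $|S| - 1$. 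Summing these finitely many polynomials (one per good type, and there are only finitely many types since $T$ and $k$ are fixed) gives a polynomial $P$ with $M^{[\gamma]}_n(\Pi) = P(n)$ for all $n \ge n_0$, where $n_0$ is large enough that every contributing binomial coefficient has entered its polynomial regime.

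The main obstacle is proving the saturation statement rigorously — specifically, arguing that once a vertical gap in the decomposition is ``large enough,'' enlarging it further neither creates nor destroys an occurrence of any $\sigma \in \Pi$. The create direction needs the remark already noted in the excerpt that $\sigma \preceq \pi$ forces $maj^+(\sigma) \le maj^+(\pi) = maj^+(\gamma)$, so only patterns of bounded extended major index — hence bounded length — are relevant; then one observes that an occurrence of such a bounded $\sigma$ in $\gamma \cdot a$ can be described by (a) which letters of $\gamma$ it uses, (b) how many letters it takes from the increasing suffix, and (c) at most a bounded amount of information about the gaps, all of which is insensitive to the precise value of any gap exceeding the (bounded) pattern length. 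I expect the bookkeeping here — defining the threshold $T$ and verifying the equivalence of $\gamma \cdot a$ and $\gamma \cdot a'$ — to be the technical heart; the concluding comb/counting step is then routine.
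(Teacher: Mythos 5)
Your overall architecture --- reduce to counting padding profiles, establish a saturation threshold $T$ so that $\Pi$-avoidance of $\gamma\cdot a$ depends only on the coordinates of $a$ truncated at $T$, then count each ``type'' by stars and bars --- is sound, and it is genuinely different from the paper's proof, which instead writes the set of admissible padding profiles as a difference of two down-sets in $\mathbb{N}_0^{k+1}$ and invokes Stanley's theorem that the number of elements of a down-set of a given size is eventually polynomial. The difficulty is that the one step you yourself flag as the technical heart, the existence of the threshold $T$, is exactly where your justification breaks for infinite $\Pi$. You argue that only patterns of bounded length are relevant because $\sigma \preceq \pi$ forces $maj^+(\sigma) \le maj^+(\pi) = maj^+(\gamma)$. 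But the identity from the preliminaries is $maj(\pi) = maj^+(\gamma)$, not $maj^+(\pi) = maj^+(\gamma)$; in fact $maj^+(\pi) = |\pi| + maj^+(\gamma)$ grows linearly in $n$. For example $\pi = 2134\cdots n$ has core $\gamma = 1$ with $maj^+(\gamma) = 1$, yet it contains the pattern $2134\cdots n$ itself, whose extended major index is $n+1$. So for a general, possibly infinite, $\Pi$ --- and the lemma, like the paper as a whole, is explicitly meant to cover infinite $\Pi$ --- arbitrarily long patterns of $\Pi$ can occur in permutations with core $\gamma$, and your $T$ is not well-defined as ``the maximum relevant pattern length.''

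The gap is repairable, but the repair is essentially the content of the result you are trying to avoid citing. The set $U$ of padding profiles $a$ such that $\gamma\cdot a$ contains some $\sigma\in\Pi$ is an up-set in $\mathbb{N}_0^{k+1}$ (this is the paper's down-set observation in reverse), so by Dickson's lemma it has finitely many minimal elements $u^{(1)},\dots,u^{(r)}$; taking $T = 1+\max_{j,i} u^{(j)}_i$ yields your saturation property, and only the finitely many patterns witnessing the $u^{(j)}$ ever matter. With that in hand, your stratification by type and the binomial count $\binom{n-k-s-|S|T+|S|-1}{|S|-1}$ are correct, and for \emph{finite} $\Pi$ your argument already works verbatim with $T = \max_{\sigma\in\Pi}|\sigma|$, no $maj^+$ bound needed. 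So either restrict the statement to finite $\Pi$ or make the Dickson's-lemma step explicit; as written, the proof of the saturation claim does not go through.
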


\begin{proof}
We will use a known property of down-sets of integer compositions.
Define a partial order $\leq$ on $\mathbb{N}^d_0$ as $(a_1, \ldots, a_d)\leq (b_1, \ldots, b_d)$ if for every $i \in [d]$ we have $a_i \leq b_i$.
A set $A \subseteq \mathbb{N}^d_0$ is a \textsl{down-set} in $\mathbb{N}^d_0$ if for every $a \in A$ and $b \leq a$, $b$ belongs to $A$ as well.
Unfortunately the set of all padding profiles from $\M^{[\gamma]}_n (\Pi)$ is not a down-set, but we can express it as a difference of two down-sets.
Define the following sets 
\begin{alignat*}{4}
&A_n &&= \{ a \mid a \in \mathbb{N}^{k+1}_0 \wedge \gamma \cdot a \in \Sn(\Pi) \} \quad &\mbox{and} \quad &A = \bigcup\nolimits_{n \geq 0} A_n \\
&B_n &&= \{ a \mid a \in A_n \wedge \forall i \leq \gamma_k \colon a_i = 0 \} \quad &\mbox{and} \quad &B = \bigcup\nolimits_{n \geq 0} B_n .
\end{alignat*}
 
Let us check that both $A$ and $B$ are down-sets in $\mathbb{N}^{k+1}_0$.
If $a$ belongs to $A$ and $b \leq a$, then the permutation $\gamma \cdot a$ contains the permutation $\gamma \cdot b$ and therefore $\gamma \cdot b$ must be $\Pi$-avoiding and $b$ belongs to $A$.
To show that $B$ is down-set, consider $a \in B$ and $b \leq a$.
We know from previous argument that $b$ also belongs to $A$ and the second condition holds since for every $i \in [\gamma_k]$ we have $b_i \leq a_i = 0$ implying $b_i = 0$.

The padding profiles of permutations from $\M^{[\gamma]}_n (\Pi)$ have at least one of the first $\gamma_k$ values positive, because such permutation has a descent at the $k$-th position.
But these are exactly the tuples which belong to $A_n$ but not to $B_n$.
Since $B_n$ is a subset of $A_n$ we get $M^{[\gamma]}_n (\Pi) = |A_n| - |B_n|$.
To complete the proof, we will use the following fact due to Stanley \cite{Stanley1975, Stanley1976}.

\begin{proposition}[Stanley]
Let $d$ be a positive integer and let $S$ be a down-set in $\mathbb{N}^d_0$.
Let $H(n)$ be the number of elements of $S$ with size~$n$.
Then there exists a polynomial~$P$ and an integer~$n_0$ such that for every $n \geq n_0$, $H(n) = P(n)$.
\end{proposition}

From this fact, we obtain that $|A_n|$ and $|B_n|$ are both polynomials for sufficiently large~$n$, therefore $M^{[\gamma]}_n(\Pi)$ is eventually equal to a polynomial as well.
\end{proof}

\begin{corollary}
\label{cor:polynomialmajor}
For a set of permutations~$\Pi$ and $m \in \mathbb{N}_0$, there exists a polynomial~$P$ and an integer~$n_0$ such that for every $n \geq n_0$, $M^m_n(\Pi) = P(n)$.
\end{corollary}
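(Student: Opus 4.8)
The plan is to deduce Corollary~\ref{cor:polynomialmajor} directly from Lemma~\ref{lemma:polynomialcore} together with the decomposition machinery of Section~\ref{sec:basics}. The key structural fact is that any permutation $\tau$ with $maj(\tau) = m$ has all descents at positions at most $m$, so the core of $\tau$ is a permutation of length at most $m$; hence there are only finitely many possible cores, and the set $C(m,\Pi)$ from Definition~\ref{def:setofcores} is finite. This finiteness is precisely what lets us reduce the question about $M^m_n(\Pi)$ to finitely many questions about $M^{[\gamma]}_n(\Pi)$.

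The proof would then proceed in two short steps. First, I would recall the identity $M^m_n(\Pi) = \sum_{\gamma \in C(m,\Pi)} M^{[\gamma]}_n(\Pi)$, which holds because the decomposition of a $\Pi$-avoiding permutation of major index $m$ into its core and padding profile is unique and every such core lies in $C(m,\Pi)$ (each summand counts the permutations in $\Mx{n}{m}{\Pi}$ with a fixed core, and every such permutation has exactly one core). Second, I would apply Lemma~\ref{lemma:polynomialcore} to each $\gamma \in C(m,\Pi)$: for each one there is a polynomial $P_\gamma$ and an integer $n_\gamma$ such that $M^{[\gamma]}_n(\Pi) = P_\gamma(n)$ for all $n \geq n_\gamma$. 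Setting $n_0 = \max_{\gamma \in C(m,\Pi)} n_\gamma$ (a maximum over a finite set, which is where finiteness of $C(m,\Pi)$ is essential) and $P = \sum_{\gamma \in C(m,\Pi)} P_\gamma$, we get $M^m_n(\Pi) = P(n)$ for all $n \geq n_0$, since a finite sum of polynomials is a polynomial.

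There is essentially no obstacle here: the corollary is a routine consequence of the lemma. The only point requiring a moment's care is to verify that $C(m,\Pi)$ is genuinely finite, which follows from the observation already recorded in the excerpt that $\Desc(\tau) \subseteq [m]$ forces the core length to be at most $m$ (equivalently, $maj^+(\gamma) = m$ bounds $|\gamma|$). Everything else — the additivity over cores, taking a finite maximum of the thresholds $n_\gamma$, and closure of polynomials under finite sums — is immediate. I would therefore keep the argument to a few lines, emphasizing only the reduction to Lemma~\ref{lemma:polynomialcore} and the finiteness of the index set.
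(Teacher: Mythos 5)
Your proposal is correct and matches the paper's argument exactly: the paper derives this corollary from the identity $M^m_n(\Pi) = \sum_{\gamma \in C(m,\Pi)} M^{[\gamma]}_n(\Pi)$, the finiteness of $C(m,\Pi)$ (since every core has length at most $m$), and Lemma~\ref{lemma:polynomialcore} applied to each core. Nothing to add.
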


Since we now know that the numbers $M_n^m(\Pi)$ are eventually equal to a polynomial, we can introduce the following notation.

\begin{definition}
\label{def:degree}
For a set of permutations~$\Pi$, let $deg(m, \Pi)$ be the degree of the polynomial~$P$ such that $M^m_n(\Pi) = P(n)$ for~$n$ large enough.
For a zero polynomial~$P$, let $deg(m, \Pi) = 0$.
\end{definition} 

Observe that for an arbitrary set of permutations~$\Pi$ and $\Omega \subseteq \Pi$, it follows that $deg(m, \Pi) \leq deg(m, \Omega)$.
This holds since any $\Pi$-avoiding permutation is trivially $\Omega$-avoiding too.

Now we would like to know how these degrees depend on~$m$ and on the structure of permutations in~$\Pi$.
It turns out that there is one important statistic of patterns which affects the degree $deg(m, \Pi)$.

\begin{definition}
\label{def:magnitude}
For a permutation~$\pi$ we will define \textsl{the magnitude of~$\pi$} as
\[
mg(\pi) =
\begin{cases}
0 & \text{if }\Desc(\pi)=\emptyset\\
k & \text{if }\Desc(\pi)=\left\{k\right\}\\
+\infty & \text{if }|\Desc(\pi)|\ge 2.
\end{cases}
\]
For a set of permutations~$\Pi$ \textsl{the magnitude of~$\Pi$}, denoted by $mg(\Pi)$, is the minimal magnitude of a permutation $\sigma \in \Pi$.
For the empty set of patterns, $mg(\emptyset) = +\infty$.
\end{definition}

Let us make an important observation about magnitude.
If a permutation~$\pi$ contains a pattern~$\sigma$ then necessarily $mg(\pi) \geq mg(\sigma)$. 

%\subsection{Sets of infinite magnitude}

As we will show, the magnitude of~$\Pi$ plays a key role in determining the value of $deg(m,\Pi)$. 
To prove this, let us first focus on the sets~$\Pi$ of infinite magnitude.
In this particular case, we can also determine the leading coefficient of the polynomial $M^m_n(\Pi)$, which will prove to be useful later.

\begin{proposition}
\label{prop:infinitemg}
Let~$\Pi$ be a set of permutations with $mg(\Pi) = +\infty$.
Then $deg(m, \Pi) = m$ and $M^m_n(\Pi) = \frac{n^m}{m!} + O(n^{m-1})$  as $n\to\infty$.
\end{proposition}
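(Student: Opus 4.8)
The plan is to sandwich $M^m_n(\Pi)$ between a lower bound and an upper bound, each of which is $\frac{n^m}{m!}+O(n^{m-1})$, and then invoke Corollary~\ref{cor:polynomialmajor} to read off the degree. Throughout we may assume $m\geq 1$, since for $m=0$ the identity is the only permutation with major index $0$ and the statement is immediate.

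\emph{Lower bound.} Since $mg(\Pi)=+\infty$, every pattern $\sigma\in\Pi$ has $mg(\sigma)=+\infty$, i.e.\ at least two descents. By the observation following Definition~\ref{def:magnitude}, any permutation $\pi$ with $|\Desc(\pi)|\le 1$ has finite magnitude and hence cannot contain any $\sigma\in\Pi$; that is, $\pi$ is $\Pi$-avoiding. The permutations of $[n]$ whose descent set is exactly $\{m\}$ are of this form and all have major index $m$, and there are exactly $\binom{n}{m}-1$ of them: choose the $m$ values occupying the first $m$ positions (they must appear in increasing order, as must the remaining $n-m$ values in the last block), and discard the identity. Therefore $M^m_n(\Pi)\ge\binom{n}{m}-1=\frac{n^m}{m!}+O(n^{m-1})$.

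\emph{Upper bound.} Trivially $M^m_n(\Pi)\le\bigl|\{\pi\in\Sx{n}\mid maj(\pi)=m\}\bigr|$, and I would estimate the right-hand side by partitioning these permutations according to their descent set. Only finitely many sets $D\subseteq\{1,\dots,m\}$ satisfy $\sum_{i\in D}i=m$, and this family does not depend on $n$. For a fixed $D=\{d_1<\cdots<d_t\}$, the number of permutations of $[n]$ with descent set contained in $D$ is the multinomial coefficient $\binom{n}{d_1,\,d_2-d_1,\,\dots,\,d_t-d_{t-1},\,n-d_t}$, which is a polynomial in $n$ of degree $\max D$. If $\sum_{i\in D}i=m$ and $|D|\ge 2$ then $\max D\le m-1$, so the only set contributing a term of degree $m$ is $D=\{m\}$, contributing $\binom{n}{m}$. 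Summing over the finitely many admissible $D$ gives $\bigl|\{\pi\in\Sx{n}\mid maj(\pi)=m\}\bigr|=\binom{n}{m}+O(n^{m-1})=\frac{n^m}{m!}+O(n^{m-1})$.

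\emph{Conclusion.} The two bounds together give $M^m_n(\Pi)=\frac{n^m}{m!}+O(n^{m-1})$. By Corollary~\ref{cor:polynomialmajor}, $M^m_n(\Pi)=P(n)$ for some polynomial $P$ and all large $n$; the asymptotics just obtained force $\deg P=m$, hence $deg(m,\Pi)=m$. I do not expect a genuine obstacle here; the only point requiring care is the degree bookkeeping in the upper bound — checking that no $D\neq\{m\}$ with $maj(D)=m$ reaches degree $m$ and that the finitely many remaining error terms genuinely combine into $O(n^{m-1})$ — but both follow at once from $\max D<\sum_{i\in D}i$ for $|D|\ge 2$ and the finiteness of the index set. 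Note also that this argument sidesteps MacMahon's equidistribution entirely, counting permutations by major index directly.
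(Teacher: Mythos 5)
Your proof is correct and follows essentially the same route as the paper: the dominant contribution comes from the permutations with descent set exactly $\{m\}$ (in the paper's language, those with core $12\cdots m$), counted as $\binom{n}{m}-1$, while everything else is bounded by $O(n^{m-1})$ before invoking Corollary~\ref{cor:polynomialmajor} to convert the asymptotics into the degree statement. The only cosmetic difference is that you organize the upper bound by descent sets whereas the paper organizes it by cores.
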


\begin{proof}
First observe that for $m = 0$ the proposition simply states that $M^0_n(\Pi) = 1$ for $n \geq n_0$.
But that is clear since $\M^0_n(\Pi) = \M^0_n(\emptyset) = \left\{12 \cdots n \right\}$.
Therefore, in the rest of the proof suppose that $m \geq 1$. 

\noindent
In order to prove our proposition, it is sufficient to prove the following claims.
\begin{enumerate}[noitemsep]
  \item For the core $\epsilon = 12 \cdots m$ we have $M^{[\epsilon]}_n(\Pi) = \frac{n^m}{m!} + O(n^{m-1})$.
  \item For every $\gamma \in C(m, \Pi) \setminus \left\{\epsilon\right\}$ there is a constant $\beta = \beta(\gamma, m, \Pi)$ such that $M^{[\gamma]}_n(\Pi) \leq \beta n^{m-1}$. 
\end{enumerate}

To prove the first claim, simply observe that any permutation with the core~$\epsilon$ has a finite magnitude, which makes it $\Pi$-avoiding.
By choosing the first~$m$ letters we uniquely get every permutation with the core~$\epsilon$ plus the permutation $12\cdots n$.
That gives us the desired enumeration $M^{[\epsilon]}_n(\Pi) = \binom{n}{m} - 1 = \frac{n^m}{m!} + O(n^{m-1}) $.

To prove the second claim, fix a core $\gamma \in C(m, \Pi) \setminus \left\{\epsilon\right\}$ of length $k$.
First observe that for $\gamma \neq \epsilon$ necessarily $k \leq m-1$.
We will bound $M^{[\gamma]}_n(\Pi)$ from above by the number of all the permutations of length~$n$ which can be expressed as $\gamma \cdot a$ for some tuple $a$.
This yields the upper bound $M^{[\gamma]}_n(\Pi) \leq \binom{n}{k} \leq \binom {n}{m-1} \leq \beta n^{m-1}$ for some $\beta$.

These claims together with Corollary~\ref{cor:polynomialmajor} give the desired polynomial behaviour.
\end{proof}

%\subsection{Sets of finite magnitude}

Let us now focus on the problem of determining $deg(m,\Pi)$ for a set~$\Pi$ of finite magnitude.
As we will show in this section, the asymptotic behaviour of these sets is far more complicated than that of sets with infinite magnitude.
Our main result is providing the values $deg(m, \Pi)$ as a function of~$m$.
As in Proposition~\ref{prop:infinitemg}, we will construct a suitable core and bound $deg(m, \Pi)$ from below by counting all the possible padding profiles.
On the other hand, we will use a different approach for obtaining the upper bound.
For a fixed core~$\gamma$, we will bound $M^{[\gamma]}_n(\Pi)$ in terms of how many coordinates of a padding profile~$a$ can be large if $\gamma \cdot a$ avoids~$\Pi$.

\begin{lemma}
\label{lemma:finiteupper}
Let~$\Pi$ be a finite set of permutations and let~$m$ and~$l$ be non-negative integers.
If every permutation~$\pi$ with $maj^+(\pi) \leq m$ and length $|\pi| > l$ contains a core of some permutation in~$\Pi$, then $deg(m, \Pi) \leq l$.
\end{lemma}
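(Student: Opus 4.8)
Here is the plan.

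\medskip
\noindent\textbf{Reduction to a single core.}
Since $M^m_n(\Pi)=\sum_{\gamma\in C(m,\Pi)}M^{[\gamma]}_n(\Pi)$ and $C(m,\Pi)$ is finite, by Corollary~\ref{cor:polynomialmajor} it suffices to show that for every $\gamma\in C(m,\Pi)$ the eventual polynomial of $M^{[\gamma]}_n(\Pi)$ has degree at most~$l$. Fix such a~$\gamma$ and put $k=|\gamma|$. From the proof of Lemma~\ref{lemma:polynomialcore} we have $M^{[\gamma]}_n(\Pi)=|A_n|-|B_n|$, where $A=\bigcup_n A_n$ is a down-set in $\mathbb{N}^{k+1}_0$ and $B_n\subseteq A_n$; hence the degree of $M^{[\gamma]}_n(\Pi)$ is at most that of $|A_n|$. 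I will use the standard sharpening of the result of Stanley quoted above (with the same proof): the counting polynomial of a down-set $D\subseteq\mathbb{N}^{d}_0$ has degree $r-1$, where $r$ is the largest size of a set $T\subseteq[d]$ such that $D$ contains every tuple supported on~$T$. Applied to $A$ (with $D=A$, $d=k+1$), this reduces the lemma to: \emph{there is no $S\subseteq[k+1]$ with $|S|=l+2$ such that $\gamma\cdot a\in\Sn(\Pi)$ for every $a$ supported on~$S$.}

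\medskip
\noindent\textbf{The witness permutation.}
Suppose such an $S$ exists; then $k\ge l+1>l$. Set $S'=S\setminus\{\max S\}$, so $|S'|=l+1$ and $S'\subseteq[k]$, and let $\rho=\gamma[P]$ where $P=\{p\in[k]:\gamma_p\in S'\}$. Then $\rho$ has length $l+1>l$, and $maj^+(\rho)\le maj^+(\gamma)=m$ because $\rho\preceq\gamma$. By the hypothesis of the lemma, $\rho$ contains the core~$\delta$ of some $\sigma\in\Pi$; write $\sigma=\delta\cdot b$ for its core--padding decomposition and $k'=|\delta|$ (the argument below also covers the degenerate case where $\delta$ is empty, i.e.\ $\sigma=12\cdots t$ is increasing, with $k'=0$).

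\medskip
\noindent\textbf{Planting $\sigma$ in a padded copy of $\gamma$.}
Lifting an occurrence of $\delta$ in $\rho$ back to $\gamma$ gives an occurrence of $\delta$ in $\gamma$ whose values $u_1<\dots<u_{k'}$ all lie in $S'$. Setting $u_0=0$ and $u_{k'+1}=k+1$, these split $[k+1]$ into the intervals $G_t=(u_{t-1},u_t]$, $t\in[k'+1]$. Each internal gap contains its right endpoint $u_t\in S'\subseteq S$, and since $u_{k'}\le\max S'<\max S\le k+1$ the top gap $G_{k'+1}$ contains $\max S\in S$; hence $G_t\cap S\ne\emptyset$ for every~$t$. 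Now take $a=N\cdot\mathbf{1}_S$, the tuple with $a_j=N$ for $j\in S$ and $a_j=0$ otherwise, for an integer $N\ge\max_t b_t$. Restricting $\gamma\cdot a$ to the positions of the chosen occurrence of $\delta$ together with those suffix letters whose values fall in $G_1,\dots,G_{k'+1}$ produces a copy of $\delta\cdot c$, where $c_t=N\,|G_t\cap S|\ge N\ge b_t$ for every $t$; since $c\ge b$ coordinatewise, $\delta\cdot c$ contains $\delta\cdot b=\sigma$. Thus $\gamma\cdot a$ contains $\sigma$, contradicting that $\gamma\cdot a$ avoids~$\Pi$. This proves the claim, and hence the lemma.

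\medskip
\noindent\textbf{Expected main obstacle.}
The step needing the most care is the construction in the last paragraph: choosing $\rho$ to be supported exactly on the values $S\setminus\{\max S\}$ is precisely what forces every needed value-gap of the core occurrence to meet~$S$ (the internal gaps automatically, the top gap through the omitted element $\max S$), and one must check carefully against Definition~\ref{def:decomposition} that the described restriction of $\gamma\cdot a$ really is order-isomorphic to $\delta\cdot c$. The remaining pieces---the core-by-core reduction, the appeal to Lemma~\ref{lemma:polynomialcore}, and the degree formula for down-sets---are routine.
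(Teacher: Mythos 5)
Your proof is correct and follows essentially the same strategy as the paper: fix a core, show that a $\Pi$-avoiding padding profile cannot have $l+2$ large (``free'') coordinates by extracting an $(l+1)$-element subpermutation of the core separating those coordinates, applying the hypothesis to find a pattern core inside it, and planting the full pattern in the large gaps. The only real difference is presentational --- you route the final count through a (correct and standard, but here unproved) degree formula for down-sets where the paper directly counts profiles with at most $l+1$ ``bad'' coordinates --- and your explicit handling of the top value-gap via $S' = S \setminus \{\max S\}$ is a nice clarification of a point the paper glosses over.
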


\begin{proof}
Let $k$ be the length of the longest permutation in~$\Pi$.
We will prove the claim by showing that for every $\gamma \in C(m,\Pi)$ there is a constant $\alpha = \alpha(\gamma, m, \Pi)$ such that $M^{[\gamma]}_n(\Pi) \leq \alpha n^l$.

Fix a core~$\gamma \in 	C(m, \Pi)$.
For a padding profile $a \in \mathbb{N}^{d}_0$ we will say that its coordinate $a_i$ is \textsl{bad} if $a_i > k$.
We claim that every permutation in $M^{[\gamma]}_n(\Pi)$ has a padding profile with at most $l+1$ bad coordinates.
Suppose for a contradiction that there is a permutation $\pi \in \M^{[\gamma]}_n(\Pi)$ with at least $l+2$ bad coordinates in its padding profile.
Let $\psi$ be the permutation order-isomorphic to $l+1$ elements from the core of $\pi$ which separate the $l+2$ bad coordinates.
Because $\psi$ is contained in the core it satisfies $maj^+(\psi) \leq m$.
But since it has length greater than~$l$ it must contain a core~$\kappa$ of some permutation~$\sigma \in \Pi$.
Furthermore, let $p \in \mathbb{N}^{l+2}_0$ be the tuple of only the $l+2$ bad coordinates from the padding profile of~$\pi$.
Observe that since $\psi$ contains $\kappa$ and every coordinate of $p$ is larger than $|\sigma|$ then $\psi \cdot p$ must contain $\sigma$.
But that is clearly a contradiction because $\psi \cdot p$ is contained in~$\pi$.

Now it suffices to show that the number of permutations with core~$\gamma$ and at most $l+1$ bad coordinates is smaller than $\alpha n^{l}$ for some $\alpha$.
Let~$d$ be the length of the core~$\gamma$.
First we have $\binom{d+1}{l+1}$ ways to choose the $l+1$ potentially bad coordinates.
We have only constantly many options for the remaining $d-l$ coordinates of the padding profile, which we can bound with $k^{d-l}$.
And finally, we will bound the number of options how to split the remaining elements into the bad coordinates by enumerating the number of ways to split~$n$ elements into~$l+1$ boxes which is $\binom{n+l}{l}$.

This yields the upper bound $M^{[\gamma]}_n(\Pi) \leq k^{d-l} \binom{d+1}{l+1} \binom{n+l}{l}$ and since the only non-constant factor is $\binom{n+l}{l}$, this indeed implies $M^{[\gamma]}_n(\Pi) \leq \alpha n^{l}$ for some $\alpha$.
\end{proof}

By combining Lemma~\ref{lemma:finiteupper} with the Erdős–Szekeres theorem~\cite{Erdos1935}, we obtain a precise characterization of the sets~$\Pi$ for which the degrees $deg(m, \Pi)$ are bounded by a constant independent of~$m$.
This illustrates that sets of patterns containing permutations with both finite and infinite magnitude can behave very miscellaneously.

\begin{proposition}
\label{prop:boundeddegree}
For a set of permutations~$\Pi$, $deg(m, \Pi)$ is bounded by a constant independent of~$m$, if and only if there is $\sigma \in \Pi$ with the core $12 \cdots k$ and $\tau \in \Pi$ with the core $l (l-1) \cdots 1$ for some~$k$ and~$l$.
\end{proposition}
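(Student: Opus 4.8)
The plan is to prove both implications by reducing everything to the Erdős–Szekeres theorem, using the magnitude/core machinery from this section. For the ``if'' direction, suppose $\sigma\in\Pi$ has core $12\cdots k$ and $\tau\in\Pi$ has core $l(l-1)\cdots 1$. Fix $m$ and let $c = \max(k,l)$; I claim $deg(m,\Pi)$ is bounded by a function of $c$ alone (in particular independent of $m$). By Lemma~\ref{lemma:finiteupper} it suffices to show that every permutation $\psi$ with $maj^+(\psi)\le m$ and length greater than some bound $l_0=l_0(c)$ contains the core of $\sigma$ or the core of $\tau$. By the Erdős–Szekeres theorem, any permutation of length greater than $(k-1)(l-1)$ contains either an increasing subsequence of length $k$ — which is precisely the core $12\cdots k$ of $\sigma$ — or a decreasing subsequence of length $l$ — which is precisely the core $l(l-1)\cdots 1$ of $\tau$. (Here I use that containing an increasing pattern of length $k$ means containing the pattern $12\cdots k$, and that an increasing pattern has itself as its core; likewise for the decreasing pattern.) Hence taking $l_0 = (k-1)(l-1)$ works, and Lemma~\ref{lemma:finiteupper} gives $deg(m,\Pi)\le l_0$ for every $m$, which is the desired constant bound. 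A minor subtlety: Lemma~\ref{lemma:finiteupper} is stated for finite $\Pi$, but we only ever need the two patterns $\sigma,\tau$, so we may apply it to $\Pi'=\{\sigma,\tau\}\subseteq\Pi$ and use $deg(m,\Pi)\le deg(m,\Pi')$.

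For the ``only if'' direction, I argue the contrapositive: assume that $\Pi$ has no member whose core is increasing, or has no member whose core is decreasing, and show $deg(m,\Pi)$ is unbounded in $m$. By symmetry (reverse-complement, which negates descents appropriately — or more simply, repeat the argument with the roles swapped) it suffices to treat the case where no $\sigma\in\Pi$ has core $12\cdots k$ for any $k$. The key observation is that a permutation $\gamma=l(l-1)\cdots 1$ is the core of $\sigma\in\Pi$ only when $\sigma$ itself is (a padding of) a decreasing permutation, and by assumption this never happens — so in fact I want to exhibit, for each $m$, a core $\gamma$ of extended major index $m$ that avoids every pattern in $\Pi$ and supports a padding profile with many free coordinates.

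The natural candidate is the decreasing core itself: take $\gamma_r = r(r-1)\cdots 1$, which has $\Desc(\gamma_r)=\{1,\dots,r-1\}$ and hence $maj^+(\gamma_r) = r + \binom{r}{2} = \binom{r+1}{2}$. For $m$ of the form $\binom{r+1}{2}$ we have $maj^+(\gamma_r)=m$; for general $m$ one pads appropriately or uses a nearby core — this is a routine adjustment to make $maj^+$ hit exactly $m$ while keeping the structure decreasing-like. Since no pattern of $\Pi$ has an increasing core, and by the magnitude observation ``if $\pi$ contains $\sigma$ then $mg(\pi)\ge mg(\sigma)$'' together with the analogous fact for cores, one checks that $\gamma_r\cdot a$ avoids $\Pi$ for every padding profile $a\in\mathbb{N}_0^{r+1}$: any occurrence of $\sigma\in\Pi$ inside such a permutation would force $\sigma$ to be order-isomorphic to a subsequence of a decreasing-plus-increasing-suffix shape, and the only patterns embeddable there with the required descent structure have increasing cores, contradiction. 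Then $M^{[\gamma_r]}_n(\Pi)$ equals the number of padding profiles of size $n-r$ in $r+1$ nonnegative coordinates with at least one of the first few positive, which is a polynomial in $n$ of degree $r$ (essentially $\binom{n-r+r}{r}$ minus a lower-degree correction). Hence $deg(m,\Pi)\ge r \to\infty$ as $m\to\infty$, so $deg(m,\Pi)$ is unbounded.

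The main obstacle I expect is making the ``only if'' direction airtight: precisely characterizing which patterns embed into a permutation of the form $\gamma_r\cdot a$ and verifying that the hypothesis ``$\Pi$ has no member with increasing core'' really does rule all of them out, including handling the exact-$maj^+$ bookkeeping so that the construction works for every $m$ and not just triangular numbers. The ``if'' direction is comparatively clean — it is just Erdős–Szekeres plus Lemma~\ref{lemma:finiteupper} — so the real work is checking the avoidance claim and the degree computation in the unbounded case, and being careful that the reverse-complement symmetry used to halve the casework genuinely preserves the ``core is increasing/decreasing'' dichotomy and the value of $maj^+$ (up to the harmless shift that the last-descent normalization introduces).
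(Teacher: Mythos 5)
Your ``if'' direction is exactly the paper's argument: pass to $\{\sigma,\tau\}\subseteq\Pi$, apply Erd\H{o}s--Szekeres to get that every permutation longer than $(k-1)(l-1)$ contains $12\cdots k$ or $l(l-1)\cdots 1$, and conclude $deg(m,\Pi)\le (k-1)(l-1)$ from Lemma~\ref{lemma:finiteupper}. That part is fine.

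The ``only if'' direction has a genuine gap, and it comes from the symmetry reduction. You cannot reduce the case ``no member of $\Pi$ has a decreasing core'' to the case ``no member has an increasing core'' by reverse-complement: that operation sends a descent at position $i$ of a length-$n$ permutation to a descent at position $n-i$, so it does not preserve $maj$ or $maj^+$, and the two cases in fact behave very differently (in one case $deg(m,\Pi)=m$ for all $m$; in the other, e.g.\ $\Pi=\{132,231\}$, the degree is nonzero only for triangular $m$). Having made that reduction, you then attach the wrong construction to the wrong hypothesis. A permutation of the form $\gamma_r\cdot a$ with $\gamma_r=r(r-1)\cdots 1$ is a decreasing prefix followed by an increasing suffix, and every pattern it contains has a \emph{decreasing} (or empty) core, not an increasing one. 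So the hypothesis that makes $\gamma_r\cdot a$ avoid $\Pi$ is ``no member of $\Pi$ has a decreasing core,'' whereas you invoke ``no member of $\Pi$ has an increasing core''; your stated justification (``the only patterns embeddable there with the required descent structure have increasing cores'') is backwards. The correct pairing, as in the paper, is: if no member of $\Pi$ has an increasing core, then every member has at least two descents, i.e.\ $mg(\Pi)=+\infty$, and Proposition~\ref{prop:infinitemg} gives $deg(m,\Pi)=m$ outright --- no construction needed; if no member has a decreasing core, use $\gamma_d=d(d-1)\cdots 1$ with $maj^+(\gamma_d)=\frac{d^2+d}{2}$ to get $deg\bigl(\frac{d^2+d}{2},\Pi\bigr)\ge d$, which already shows unboundedness. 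Your worry about hitting every $m$ rather than just triangular values is misplaced in two ways: unboundedness over a subsequence of $m$ suffices, and the ``routine adjustment'' to hit arbitrary $m$ is in general impossible (again $\Pi=\{132,231\}$).
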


\begin{proof}
To prove one implication, assume that~$\Pi$ contains such~$\sigma$ and~$\tau$.
We know that $deg(m, \Pi) \leq deg(m, \left\lbrace \sigma, \tau \right\rbrace)$.
From the Erdős–Szekeres theorem~\cite{Erdos1935}, it follows that every permutation longer than $(l-1)(k-1)$ contains either $12 \cdots k$ or $l {(l-1)} \cdots 1$.
Therefore, we obtain the inequality $deg(m, \Pi) \leq (k-1)(l-1)$ from Lemma~\ref{lemma:finiteupper}.

We will prove the other implication by proving its contrapositive.
Assume that~$\Pi$ does not contain any permutation with an increasing core.
In other words $mg(\Pi) = + \infty$ and Proposition~\ref{prop:infinitemg} implies that $deg(m,\Pi)=m$.
Therefore, $deg(m, \Pi)$ is unbounded.

Finally, assume that~$\Pi$ does not contain any permutation with a decreasing core.
In this case we cannot precisely express $deg(m,\Pi)$.
However, if $m = \frac{d^2 + d}{2}$ for some integer~$d$ then every permutation with the core $d (d-1) \cdots 1$ is $\Pi$-avoiding and has major index~$m$.
Since there are~$\binom{n-1}{d}$ such permutations of length~$n$, we get the inequality $deg(m, \Pi) \geq d$.
And again $deg(m, \Pi)$ is unbounded.
\end{proof}

Now we will focus on determining the values $deg(m, \Pi)$ for sets~$\Pi$ of finite magnitude.
As we already discussed in Section~\ref{sec:fixed-major}, for any set of permutations~$\Pi$ with magnitude~$0$, we have $M^m_n(\Pi) = 0$ for $n \geq n_0$.
It is not hard to show that the values eventually get constant in the case of sets with magnitude~$1$.

\begin{proposition}
\label{prop:mg1}
If~$\Pi$ is a set of permutations with magnitude $mg(\Pi)=1$, then $deg(m, \Pi) = 0$.
\end{proposition}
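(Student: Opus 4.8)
The plan is to reduce the statement to a single pattern and then invoke Lemma~\ref{lemma:finiteupper} with the extremal parameter $l=0$. Since $mg(\Pi)=1$, Definition~\ref{def:magnitude} gives a pattern $\sigma\in\Pi$ with $mg(\sigma)=1$, i.e. $\Desc(\sigma)=\{1\}$. Because $\{\sigma\}\subseteq\Pi$ we have $deg(m,\Pi)\le deg(m,\{\sigma\})$, and since the degree is nonnegative by Definition~\ref{def:degree}, it suffices to prove $deg(m,\{\sigma\})\le 0$. Passing to the singleton $\{\sigma\}$ is important, since $\Pi$ itself may be infinite while Lemma~\ref{lemma:finiteupper} needs a finite set of patterns.

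The crucial observation is that a pattern of magnitude $1$ has a one-element core. Writing $s=|\sigma|$, the condition $\Desc(\sigma)=\{1\}$ means $\sigma_1>\sigma_2<\sigma_3<\cdots<\sigma_s$, so the last descent of $\sigma$ occurs at position $1$; by Definition~\ref{def:core} the core of $\sigma$ is therefore the one-element permutation. Consequently every nonempty permutation contains the core of $\sigma$, so in particular every permutation $\pi$ with $maj^+(\pi)\le m$ and $|\pi|>0$ contains the core of some permutation in $\{\sigma\}$.

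This verifies the hypothesis of Lemma~\ref{lemma:finiteupper} for the finite set $\{\sigma\}$ with $l=0$, hence $deg(m,\{\sigma\})\le 0$. Combined with the reduction above and the nonnegativity of the degree, this gives $deg(m,\Pi)=0$, as claimed. (This parallels the use of Lemma~\ref{lemma:finiteupper} in the proof of Proposition~\ref{prop:boundeddegree}, except that here one of the relevant cores is a single point, which is contained in every permutation and so forces $l=0$.)

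I do not expect a genuine obstacle. Once one notices that magnitude-$1$ patterns have a one-element core, Lemma~\ref{lemma:finiteupper} applies immediately with its strongest bound, and the major-index constraint $maj^+(\pi)\le m$ in the lemma's hypothesis is not even needed. The only point requiring a little care is performing the reduction to a finite (singleton) set before invoking the lemma.
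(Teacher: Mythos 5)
Your proof is correct and follows essentially the same route as the paper: reduce to a singleton $\{\sigma\}$ with $mg(\sigma)=1$, note that its core is the one-element permutation contained in every non-empty permutation, and apply Lemma~\ref{lemma:finiteupper} with $l=0$. Your explicit remarks that the core of a magnitude-$1$ pattern is a single point and that the reduction to a finite set is needed before invoking the lemma are just slightly more detailed versions of steps the paper leaves implicit.
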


\begin{proof}
We know that $deg(m, \Pi) \geq 0$ for every $m$ and $\Pi$.
Therefore, it is sufficient to bound $deg(m, \Pi)$ from above.
Fix a permutation $\tau \in \Pi$ with magnitude~1.
Since $deg(m, \Pi) \leq deg(m, \left\lbrace \tau \right\rbrace)$ and every non-empty permutation contains the pattern~1, we get $deg(m, \Pi) \leq 0$ directly from Lemma~\ref{lemma:finiteupper}.
\end{proof}

The next result determines $deg(m,\Pi)$ for all sets of permutations of magnitude at least~3 where every permutation has a finite magnitude.

\begin{theorem}
\label{thm:magnitude2}
Let~$\Pi$ be a set of permutations such that every permutation $\sigma \in \Pi$ has a finite magnitude and $mg(\Pi) = k$, where $k$ is an integer larger than~2.
Then $deg(0, \Pi) = 0$ and for $m \geq 1$
\[deg(m, \Pi) = \biggl \lfloor \frac{(d-1)(k-1)}{2} + \frac{m}{d} \biggr \rfloor \quad
 \mbox{where} \quad d = \biggl \lceil \frac{1}{2} \biggl(-1 + \sqrt{1 + \frac{8m}{k-1}} \;\biggr) \biggr \rceil .\]
\end{theorem}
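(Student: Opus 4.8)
The plan is to prove that $deg(m,\Pi)$ depends on $\Pi$ only through $k=mg(\Pi)$, by showing it equals
$$L(m,k):=\max\bigl\{\,|\tau| : maj^+(\tau)\le m,\ \tau\text{ has no increasing subsequence of length }k\,\bigr\},$$
with $L(0,k)=0$, and then checking by an elementary optimization that $L(m,k)$ equals the displayed expression. The case $m=0$ is immediate: $\M^0_n(\Pi)=\{12\cdots n\}$, since the identity has magnitude $0<k$ and hence avoids $\Pi$, so $deg(0,\Pi)=0$.

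For the upper bound $deg(m,\Pi)\le L(m,k)$, fix $\sigma_0\in\Pi$ with $mg(\sigma_0)=k$. Then $\sigma_0$ has a single descent, at position $k$, so the core of $\sigma_0$ is the increasing permutation $12\cdots k$. A permutation contains $12\cdots k$ exactly when its longest increasing subsequence has length at least $k$, so by the definition of $L(m,k)$, every permutation $\pi$ with $maj^+(\pi)\le m$ and $|\pi|>L(m,k)$ contains the core of $\sigma_0$. Applying Lemma~\ref{lemma:finiteupper} to the finite set $\{\sigma_0\}$ gives $deg(m,\{\sigma_0\})\le L(m,k)$, and since $\{\sigma_0\}\subseteq\Pi$ we conclude $deg(m,\Pi)\le deg(m,\{\sigma_0\})\le L(m,k)$.

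For the lower bound I would pass to the worst case. Every $\sigma\in\Pi$ has finite magnitude at least $k$, so it has exactly one descent, at a position $\ge k$; hence $\Pi$ is contained in the set $\Pi^\ast$ of all permutations of this form, and therefore $deg(m,\Pi)\ge deg(m,\Pi^\ast)$. A permutation avoids $\Pi^\ast$ precisely when it has no increasing subsequence of length $k$ followed by a strictly smaller entry. Now choose a core $\gamma$ realizing $L(m,k)$: a permutation of length $D:=L(m,k)$ with $maj^+(\gamma)=m$ and longest increasing subsequence of length at most $k-1$, taken to be a ``descending staircase'' whose maximal increasing runs, read left to right, have lengths $c_1\le c_2=\cdots=c_s=k-1$ and occupy strictly decreasing value intervals; such a staircase has $maj^+$ equal to $sD-(k-1)\binom{s}{2}$ with $s=\lceil D/(k-1)\rceil$, and splitting runs raises $maj^+$ through every intermediate value up to that of the decreasing permutation, so the value $m$ is attainable. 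For any padding profile $a$, every entry of the increasing suffix of $\gamma\cdot a$ lies after all entries of $\gamma$, so an increasing subsequence of $\gamma\cdot a$ of length $k$ that is followed by a smaller entry would have to lie entirely inside $\gamma$ — impossible — whence $\gamma\cdot a$ avoids $\Pi^\ast$; moreover $\gamma$ is the core of $\gamma\cdot a$ whenever $a_i>0$ for some $i\le\gamma_D$. Counting the profiles $a$ with $|a|=n-D$ then gives $M^{[\gamma]}_n(\Pi^\ast)=\binom{n}{D}-(\text{a term of degree }D-\gamma_D\le D-1)$, the subtracted term counting the profiles with $a_1=\cdots=a_{\gamma_D}=0$. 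Hence $deg(m,\Pi^\ast)\ge D=L(m,k)$, and the two bounds match.

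It remains to identify $L(m,k)$ with the closed form. A permutation with longest increasing subsequence at most $k-1$ has all maximal increasing runs of length $\le k-1$, hence at least $\lceil|\tau|/(k-1)\rceil$ of them, and a direct estimate of the descent positions shows the minimum of $maj^+$ over length-$D$ permutations with this property is exactly $sD-(k-1)\binom{s}{2}$ for $s=\lceil D/(k-1)\rceil$; maximizing $D$ subject to this being $\le m$ and optimizing over the admissible $s$ yields $D=\bigl\lfloor (d-1)(k-1)/2+m/d\bigr\rfloor$ with $d$ as in the statement. I expect the two most delicate points to be (a) this last optimization — making the floor, the ceiling, and the permitted range of $s$ cooperate at the boundary values $m\approx(k-1)\binom{d+1}{2}$ — and (b) confirming that the extremal length $D$ is attained by a core whose extended major index is exactly $m$, i.e.\ that for this $D$ the attainable values of $maj^+$ among length-$D$ permutations with short longest increasing subsequence form an interval containing $m$; everything else is bookkeeping over the core/padding decomposition and Lemma~\ref{lemma:finiteupper}.
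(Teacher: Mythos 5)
Your overall strategy coincides with the paper's: reduce $deg(m,\Pi)$ to the quantity $L(m,k)$, the maximal length of a $12\cdots k$-avoiding permutation with extended major index at most $m$, obtaining the upper bound from Lemma~\ref{lemma:finiteupper} (since the core of any $\sigma\in\Pi$ of magnitude $k$ is $12\cdots k$) and the lower bound by counting padding profiles over an extremal core. Both reductions are carried out correctly; your detour through the superset $\Pi^\ast$ is a harmless repackaging of the paper's direct observation that any permutation whose core avoids $12\cdots k$ avoids $\Pi$, and your count $\binom{n}{D}$ minus a lower-order term is the right way to finish the lower bound.

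The gap lies in the two steps you yourself flag as delicate, and neither is bookkeeping --- together they are the substance of the theorem. First, the lower bound requires a $12\cdots k$-avoiding core of length exactly $L(m,k)$ whose extended major index is \emph{exactly} $m$: a core with $maj^+<m$ yields permutations of the wrong major index, so the maximum over $\{maj^+\le m\}$ must be shown to coincide with the maximum over $\{maj^+=m\}$. Your justification is the unproved assertion that ``splitting runs raises $maj^+$ through every intermediate value,'' and this is precisely where the hypothesis $k>2$ enters: for $k=2$ the only $12$-avoiding permutation of length $D$ is the decreasing one, nothing can be split, the attainable values of $maj^+$ are exactly the triangular numbers, and the formula fails --- which is why the paper needs the separate Proposition~\ref{prop:mg2}. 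The paper closes this step not by an interval argument but by an explicit construction: it takes $d$ minimal with $\frac{d^2+d}{2}(k-1)\ge m$, derives parameters $s<k-1$ and $p<d$, and writes down a co-layered permutation with descent set $d_i=i(k-1)-s$ for $i\le d-p$ and $d_i=i(k-1)-s-1$ otherwise, checking that these positions are positive and strictly increasing (the check that uses $k\ge 3$) and that the resulting length and $maj^+$ are $l_4$ and $m$. Second, the identification of $L(m,k)$ with the closed form $\bigl\lfloor\frac{(d-1)(k-1)}{2}+\frac{m}{d}\bigr\rfloor$ is exactly the optimization you defer in point (a); without it the stated formula is not established. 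Until both are written out --- in particular the attainability of $maj^+=m$ at the extremal length, with the role of $k>2$ made explicit --- what you have is a correct plan rather than a proof.
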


\begin{proof}
Any permutation~$\sigma$ with major index~$0$ is strictly increasing, therefore $\sigma$ avoids $\Pi$ and $M^0_n(\Pi) = 1 = n^0$.
In the rest of the proof suppose $m \geq 1$.

\noindent
We will prove the theorem by showing that the following values are equal.
\begin{enumerate}[noitemsep]
  \item The degree of the polynomial $l_1 = deg(m, \Pi)$.
  \item The largest integer~$l_2$ such that there is a $12 \cdots k$-avoiding permutation~$\sigma$ with $maj^+(\sigma) \leq m$.
  \item The largest integer~$l_3$ such that there is a $12 \cdots k$-avoiding permutation~$\pi$ with $maj^+(\pi) = m$.
  \item The value $l_4 = \left\lfloor \frac{(d-1)(k-1)}{2} + \frac{m}{d} \right \rfloor$, where $d = \left \lceil \frac{1}{2} \left(-1 + \sqrt{1 + \frac{8m}{k-1}} \;\right) \right\rceil$.
\end{enumerate}

First observe that trivially $l_2 \geq l_3$.
We will prove $l_3 \geq l_4$ by constructing a $12 \cdots k$-avoiding permutation~$\pi$ of length~$l_4$ satisfying $maj^+(\pi) = m$.
For a permutation $\psi \in \Sn$ we say that $\psi$ is \textsl{co-layered} if~$\psi$ avoids both $132$ and $213$.
Observe that any co-layered permutation is uniquely determined by its descent set.
Let~$d$ be the smallest positive integer such that $\frac{d^2 + d}{2}(k-1) \geq m$.
Furthermore, let~$s$ be the largest integer such that $\frac{d^2 + d}{2}(k-1) - ds \geq m$ and let $p = \frac{d^2 + d}{2}(k-1) - ds - m$.
Note that $s < k - 1$ because otherwise the first inequality would also hold for $d' = d-1$.
We also know that $p < d$ because otherwise the above inequalities would hold for $s' = s+1$ and $p' = p-d$, which contradicts our choice of~$s$.

Let~$\pi$ be a co-layered permutation of length~$d_d$ with the descent set $\Desc(\pi) = \left\{d_1, d_2, \ldots , d_{d-1}\right\}$, where
\[d_i =
\begin{cases}
i (k-1) - s      &\mbox{for } \phantom{d - p} 1 \leq i \leq d-p \\ 
i (k-1) - s - 1  &\mbox{for } \phantom{1} d-p < i \leq d .
\end{cases}\]

\begin{figure}[h]
\centering
\includegraphics[scale=\figurescale]{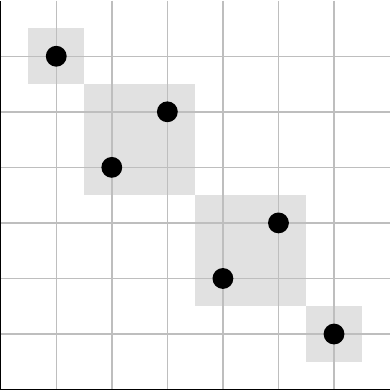}
\caption{
The constructed co-layered permutation for $m = 15$ and $k = 3$. 
In this case we get $d = 4$, $s = 1$ and $p = 1$.
}
\label{fig:layered-core}
\end{figure}

To see that $\pi$ is correctly defined, we will show that the $d_i$ are strictly increasing and positive.
From the inequalities $s < k - 1$ and $p < d$, it follows that $d_1\ge 1$, and that $d_{i+1} > d_i$ for every index~$i$.
Note that this is where the proof would fail for sets with magnitude~$k = 2$, since for $p \neq 0$ we would have $d_{d-p} = d_{d-p+1}$ and we could not construct such permutation.

Observe that~$\pi$ avoids $12 \cdots k$ because the longest increasing subsequence in any co-layered permutation is between two adjacent descents and for any $i \in [d-1]$ we have $d_{i+1} - d_i \leq k-1$.
We see that $\pi$ satisfies
\[maj^+(\pi) = \sum^d_{i=1} d_i = \frac{d^2 + d}{2}(k-1) - ds - p = m .\]

Furthermore, we will show that~$\pi$ has length~$l_4$.
By solving the equations we get
\[ d = \biggl \lceil \frac{1}{2} \biggl(-1 + \sqrt{1 + \frac{8m}{k-1}} \;\biggr) \biggr \rceil,
\quad s = \biggl \lfloor \frac{(d + 1) (k-1)}{2} - \frac{m}{d}\biggr \rfloor
.\]

Notice that we subtract 1 during the calculation of $d_d$ if and only if $p \neq 0$ which happens when~$\frac{(d + 1)(k-1)}{2} - \frac{m}{d}$ is not an integer.
This justifies the following equation
\[d_d = d (k-1) - \biggl \lceil \frac{(d + 1)(k-1)}{2} - \frac{m}{d}\biggr \rceil = \biggl \lfloor \frac{(d-1)(k-1)}{2} + \frac{m}{d} \biggr \rfloor = l_4 .\]

In order to prove $l_4 \geq l_2$, let $\tau$ be a $12 \cdots k$-avoiding permutation with length $t \geq l_4+1$.
Because $\tau$ avoids $12 \cdots k$ there has to be a descent in the sequence $\tau_{t-(k-1)} \cdots \tau_t$, another one in the sequence $\tau_{t-2(k-1)} \cdots \tau_{t-(k-1)}$ and so on.
But that leads to the following inequality
\[
maj^+(\tau) \geq t + \sum_{i = 1}^{i(k-1) < t} \left( t - i(k-1) \right)  > d_d + \sum_{i = 1}^{d-1} d_i = m 
.\]

So far we have proved the equality $l_2 = l_3 = l_4$.
Now we will show that $l_1 \geq l_3$.
Let $\sigma$ be the longest $12 \cdots k$ avoiding permutation with extended major index~$m$.
Observe that any permutation with core $\sigma$ avoids $\Pi$ and has major index~$m$.
We will bound $M^{[\sigma]}_n(\Pi)$ from below with the number of such permutations which have its minimum on the position $l_3+1$.
We can arbitrarily choose~$l_3$ letters, which will form the core, from all letters except the letter~$1$.
That gives us the lower bound $M^m_n(\Pi) \geq M^{[\sigma]}_n(\Pi) \geq \binom{n-1}{l_3} \geq \alpha n^{l_3}$ for some constant $\alpha$.

Finally, we will complete the proof by showing that $l_2 \geq l_1$.
Fix a permutation $\tau \in \Pi$ with the minimal magnitude~$k$.
Trivially the inequality $deg(m, \Pi) \leq deg(m, \left\lbrace \tau \right\rbrace )$ holds.
And because every permutation~$\psi$ with $maj^+(\psi) \leq m$ and length greater than~$l_2$ contains $12 \cdots k$, we get the desired upper bound from Lemma~\ref{lemma:finiteupper}.
\end{proof}

Notice that for $m \leq k - 1$ we obtain from Theorem~\ref{thm:magnitude2} $d = 1$ and $deg(m,\Pi) = deg(m, \emptyset) = m$.
On the other hand for $m \geq k$ the degree is strictly smaller than~$m$ and behaves approximately as $\sqrt{m}$.

As suggested by Proposition~\ref{prop:boundeddegree}, Theorem~\ref{thm:magnitude2} does not hold for the sets~$\Pi$ containing permutations with both finite and infinite magnitude.
Similar claim also cannot hold in general for the sets of magnitude~$2$.
Consider the set $\Pi = \left\{ 132, 231\right\}$ of magnitude~$2$ and $\sigma \in \Sn(\Pi)$.
Let~$j$ be an integer such that $\sigma_j  = 1$.
Then the sequence $\sigma_1 \sigma_2 \cdots \sigma_j$ is decreasing since $\sigma$ avoids 231 and similarly the sequence $\sigma_{j} \sigma_{j+1} \cdots \sigma_{n}$ is increasing because $\sigma$ avoids 132.
In other words, every $\Pi$-avoiding permutation has a decreasing core.
On the other hand, every permutation~$\pi$ with the decreasing core $(d-1)(d-2) \cdots 1$ avoids $\Pi$ and $maj(\pi) = \frac{d^2 + d}{2}$.
As a result, $deg(m, \Pi) \neq 0$ if and only if~$m$ can be expressed as $m = \frac{d^2 + d}{2}$ for some integer~$d$.
Therefore, unlike what we have seen so far, the degrees in this case do not satisfy $deg(m, \Pi) \leq deg(m+1,\Pi)$.

However, we can prove a weaker version of Theorem~\ref{thm:magnitude2} by placing some further restrictions on the set~$\Pi$ of magnitude~$2$.

\begin{proposition}
\label{prop:mg2}
Let~$\Pi$ be a set of permutations such that every permutation $\sigma \in \Pi$ has a finite magnitude and $mg(\Pi) = 2$.
Furthermore, assume that there is $i \in \left\{1,2,3\right\}$ such that every permutation $\pi \in \Pi$ with $mg(\pi) = 2$ has a padding profile $a \in \mathbb{N}^3_0$ with $a_i \neq 0$.
Then $deg(m, \Pi) = \lfloor \frac{-1 + \sqrt{1+8m}}{2} \rfloor$.
\end{proposition}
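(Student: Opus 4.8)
The plan is to prove $deg(m,\Pi)=\bigl\lfloor\tfrac{-1+\sqrt{1+8m}}{2}\bigr\rfloor=:D(m)$, the largest integer $d$ with $\binom{d+1}{2}\le m$, by matching an upper and a lower bound; for $m=0$ both sides are $0$ since $\M^0_n(\Pi)=\{12\cdots n\}$.

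\emph{Upper bound.} Fix $\tau\in\Pi$ with $mg(\tau)=2$, which exists because $mg(\Pi)=2$; its core is the pattern $12$. A strictly decreasing permutation of length $e$ has extended major index $e+\binom{e}{2}=\binom{e+1}{2}$, so every permutation $\psi$ with $maj^+(\psi)\le m$ and $|\psi|>D(m)$ fails to be strictly decreasing and hence contains $12$, i.e.\ it contains the core of $\tau$. Lemma~\ref{lemma:finiteupper} applied to $\{\tau\}$ with $l=D(m)$ then gives $deg(m,\{\tau\})\le D(m)$, and therefore $deg(m,\Pi)\le deg(m,\{\tau\})\le D(m)$. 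This half uses neither the hypothesis on the coordinate $i$ nor the finiteness of magnitudes.

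\emph{Lower bound.} I would exhibit a single core $\gamma^*$ with $maj^+(\gamma^*)=m$ together with a $D(m)$-parameter family of padding profiles $a$ for which $\gamma^*\cdot a$ avoids $\Pi$; since $M^{[\gamma^*]}_n(\Pi)$ is eventually a polynomial and is at least the size of such a family, $deg(m,\Pi)\ge\deg M^{[\gamma^*]}_n(\Pi)\ge D(m)$. Write $d=D(m)$ and $m=\binom{d+1}{2}+r$ with $0\le r\le d$. If $r=0$, take $\gamma^*=d\,(d-1)\cdots 1$: every permutation with this core is a decreasing block followed by an increasing block, so it has no increasing subsequence of length $2$ among its first $d$ positions and therefore — since every $\sigma\in\Pi$ has finite magnitude, hence an increasing prefix of length $mg(\sigma)\ge2$ — contains no pattern of $\Pi$; there are $\binom{n-1}{d}$ such permutations. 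If $r\ge1$ the core must have length $d+1$, and I take $\gamma^*$ to be a $123$-avoiding permutation of length $d+1$ with descent set $[d]\setminus\{d-r+1\}$, so that $maj^+(\gamma^*)=m$, chosen according to the value of $i$: for $i=1$, $\gamma^*=(d+1)\,d\cdots(r+2)\,1\,(r+1)\cdots2$, whose only non-inversions involve the position of the value $1$, so freezing $a_1=0$ puts the global minimum of $\gamma^*\cdot a$ there; for $i=2$, $\gamma^*$ is the decreasing permutation on $[d+1]$ with the entries $r$ and $r+1$ transposed, whose unique non-inversion joins those two consecutive values, so freezing $a_{r+1}=0$ makes the corresponding entries of $\gamma^*\cdot a$ consecutive integers; for $i=3$, $\gamma^*=d\,(d-1)\cdots r\,(d+1)\,(r-1)\cdots1$, all of whose non-inversions end at the position of the maximum, so freezing $a_{d+2}=0$ makes that entry of $\gamma^*\cdot a$ equal to $n$. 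In each case $\gamma^*$ avoids $123$, so the structural observation in the proof of Theorem~\ref{thm:magnitude2} — that a permutation whose core is $\gamma$ can contain a pattern of magnitude $j$ only if $12\cdots j\preceq\gamma$ — rules out every pattern of $\Pi$ of magnitude at least $3$; and by the hypothesis every magnitude-$2$ pattern of $\Pi$ has an element below its ``2'' (case $i=1$), strictly between its ``2'' and ``3'' (case $i=2$), or above its ``3'' (case $i=3$), which the frozen coordinate makes impossible to realise inside $\gamma^*\cdot a$. The padding profiles with that one coordinate set to $0$, subject to the remaining decomposition constraint that $\gamma^*$ actually be the core, form a family of size $\Theta(n^d)$.

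The main obstacle is the $r\ge1$ construction. One must verify, uniformly across the three cases, that any occurrence of a magnitude-$2$ pattern of $\Pi$ in $\gamma^*\cdot a$ is forced to use the unique non-inversion of $\gamma^*$ lying inside the first $d+1$ positions — this rests on the fact that the top of the single descent of such a pattern must land in the core part of $\gamma^*\cdot a$, since the padded suffix is increasing — and that freezing a single coordinate is simultaneously enough to avoid $\Pi$ and leaves a genuinely $d$-dimensional family, compatibly with $\gamma^*$ being the core (i.e.\ the last descent). Pinning down the three model cores above and checking $maj^+(\gamma^*)=m$, their $123$-avoidance, and their non-inversion structure is the technical heart; the remaining counting is routine.
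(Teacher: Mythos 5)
Your proposal is correct and follows essentially the same route as the paper: the upper bound via Lemma~\ref{lemma:finiteupper} applied to a single magnitude-2 pattern (whose core is $12$) together with the Erd\H{o}s--Szekeres-style observation that a $12$-avoiding permutation of length $e$ has extended major index $\binom{e+1}{2}$, and the lower bound via a length-$(d+1)$ core with a single frozen padding coordinate; in fact your three cores for $r\ge 1$ coincide exactly with the paper's $\epsilon[l+1-d\to 1]$, $\epsilon[l+1-d\to d]$, $\epsilon[l+2-d\to l+1]$, and your frozen coordinates $a_1$, $a_{r+1}$, $a_{d+2}$ match the paper's choices. The verifications you flag as the technical heart (that $maj^+(\gamma^*)=m$, that $\gamma^*$ avoids $123$, and that every occurrence of a magnitude-$2$ pattern must use the distinguished non-inversion of the core) all go through as you describe, so there is no gap.
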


\begin{proof}
Let~$l$ be the largest integer such that $\frac{l^2 + l}{2} \leq m$. 
By solving the quadratic equation, we get $l = \lfloor \frac{-1 + \sqrt{1+8m}}{2} \rfloor$.
Again to show that the degree of the polynomial is equal to~$l$, we will prove the following claims.
\begin{enumerate}[noitemsep]
  \item There is a constant $\alpha = \alpha(m, k)$ such that $M^m_n(\Pi) \geq \alpha n^l$.
  \item The inequality $deg(m, \Pi) \leq l$ holds.
\end{enumerate}

We will construct a core $\gamma$ for which $M^{[\gamma]}_n(\Pi) \geq \alpha n^l$ holds.
If we have~$m = \frac{l^2 + l}{2}$ we will take as a core the descending permutation of length~$l$.
Every permutation with this core is $\Pi$-avoiding and has major index~$m$, thus giving the desired lower bound.

Otherwise let $d = m - \frac{l^2 + l}{2}$.
Observe that $d \leq l$, because otherwise $\frac{{l'}^2 + l'}{2} \leq m$ would hold for $l' = l + 1$.  
Now we will construct a core of length $l+1$ depending on the $i \in \left\{1,2,3\right\}$, for which the assumptions of the proposition hold.
Let $\epsilon = l \cdots 1$, then we will construct the core~$\gamma$ by inserting one letter to~$\epsilon$, 
\[\gamma =
\begin{cases}
\epsilon[l + 1 - d \to 1]     &\mbox{for } i = 1\\ 
\epsilon[l + 1 - d \to d]     &\mbox{for } i = 2 \\
\epsilon[l + 2 - d \to l+1]   &\mbox{for } i = 3.
\end{cases}\]

For an example see Figure~\ref{fig:core-mg2}.
Observe that $\gamma$ no longer avoids $12$, but it satisfies $maj^+(\gamma) = m$.
Let $a \in \mathbb{N}^{l+2}_0$ be a tuple which satisfies one of the following conditions depending on the value of $i$.
\[
\begin{aligned}
a_1 = 0 \mbox{ , } a_2 \neq 0 \quad &\mbox{for } i = 1\\ 
a_{d+1} = 0 \mbox{ , } a_1 \neq 0    \quad &\mbox{for } i = 2 \\
a_{l+2} = 0 \mbox{ , } a_1 \neq 0    \quad &\mbox{for } i = 3
\end{aligned}\]

\begin{figure}[h]
\centering
\includegraphics[scale=\figurescale]{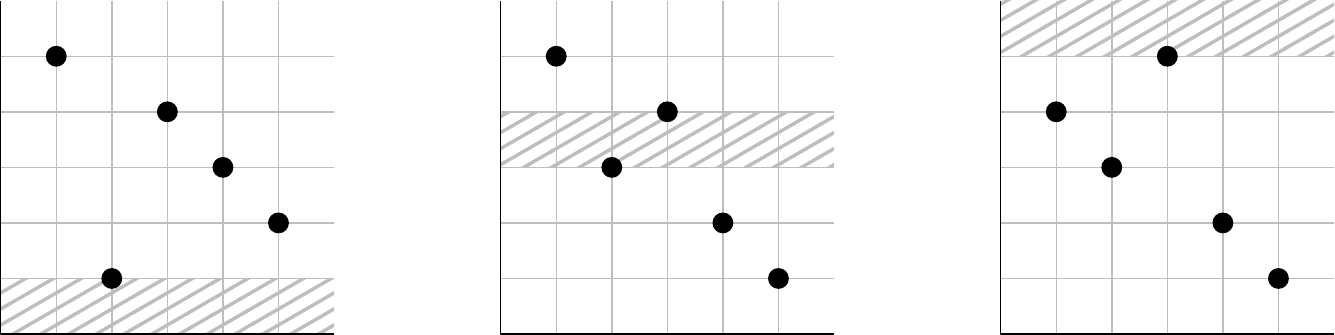}
\caption{
The constructed cores for $m = 15$, from left to right for $i = 1,2,3$.
Any permutation with such core and no element in the grey strip is $\Pi$-avoiding.
}
\label{fig:core-mg2}
\end{figure}

We know that $\gamma \cdot a$ has major index~$m$ and does not contain any permutation with magnitude larger than~2.
But it also cannot contain any permutation $\sigma \in \Pi$ with magnitude~$2$ because of the conditions above.
Therefore $\gamma \cdot a \in \M^m_n(\Pi)$.
Since there are $\binom{n-2}{l}$ such padding profiles, we see that  $M^{[\gamma]}_n(\Pi) \geq \alpha n^l$ for some $\alpha$.

To prove our second claim, fix a permutation~$\tau \in \Pi$ with magnitude~2.
Observe that any permutation~$\sigma$ with length at least $l+1$ for which $maj^+(\sigma) \leq m$ holds, necessarily contains $12$.
Therefore, the upper bound $deg(m, \Pi) \leq l$ is implied by Lemma~\ref{lemma:finiteupper}. 
\end{proof}

As one would expect the formula for $deg(m,\Pi)$ in Proposition~\ref{prop:mg2} gives the same result as the one in Theorem~\ref{thm:magnitude2} for $k=2$.
It is straightforward to check if you express~$m$ as $m = \frac{t^2 + t}{2} + s$ for some integer~$t$ and $s \leq t$.

From Propositions~\ref{prop:infinitemg}, \ref{prop:mg1}, \ref{prop:mg2} and Theorem~\ref{thm:magnitude2} we know the values of $deg(m,\Pi)$ for all sets $\Pi$ with $|\Pi| = 1$ and $mg(\Pi) \geq 1$.
Furthermore, as we already know, for any permutation~$\sigma$ with magnitude~0, we eventually have $M^m_n(\sigma) = 0$.

\begin{corollary}
\label{cor:principalclasses}
For a singleton set $\Pi = \lbrace \sigma \rbrace$  and $k = mg(\Pi) = mg(\sigma)$
\[
deg(m, \Pi) =
\begin{cases}
m &\mbox{ if } k = + \infty \\
0 &\mbox{ if } k \leq 1 \mbox{ or } m = 0\\
s_{k}(m) &\mbox{ otherwise}  
\end{cases}
\]
where $s_k(m) = \biggl \lfloor \frac{(d-1)(k-1)}{2} + \frac{m}{d} \biggr \rfloor$ and $d = \biggl \lceil \frac{1}{2} \biggl(-1 + \sqrt{1 + \frac{8m}{k-1}} \;\biggr) \biggr \rceil$.
Furthermore, $deg(m, \Pi) = m$ for $m < k$ and otherwise $deg(m, \Pi) < m$.
\end{corollary}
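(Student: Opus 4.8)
The corollary is a case analysis on $k = mg(\sigma)$ that assembles the results already proved in this section, so the plan is to match each regime to its source. When $k = +\infty$ the equality $deg(m,\Pi) = m$ is exactly Proposition~\ref{prop:infinitemg}. When $k = 0$ the pattern $\sigma$ is an increasing pattern $12\cdots l$, and by the Erdős–Szekeres observation recorded after Theorem~\ref{thm:monotonity} we have $M^m_n(\sigma) = 0$ for all large $n$, so the eventual polynomial is zero and $deg(m,\Pi) = 0$ by the convention of Definition~\ref{def:degree}. When $k = 1$ the value $deg(m,\Pi) = 0$ is Proposition~\ref{prop:mg1}. The row $m = 0$ is immediate: the only length-$n$ permutation with major index $0$ is $12\cdots n$, so $\Mx{n}{0}{\Pi}$ has cardinality $0$ or $1$ for every $n$ and hence $deg(0,\Pi) = 0$. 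These cases (which overlap, but consistently) leave only $m \ge 1$ with finite $k \ge 2$.

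For $m \ge 1$ and $k \ge 3$ the single pattern $\sigma$ has finite magnitude, so $\Pi = \{\sigma\}$ meets the hypotheses of Theorem~\ref{thm:magnitude2}, which yields $deg(m,\Pi) = s_k(m)$ directly. The case $k = 2$ needs one preliminary check so that Proposition~\ref{prop:mg2} applies: since $mg(\sigma) = 2$ forces $\Desc(\sigma) = \{2\}$, the core of $\sigma$ is the pattern $12$ and its padding profile $a \in \mathbb{N}^3_0$ satisfies $a_1 > 0$ or $a_2 > 0$ by Definition~\ref{def:core}, so taking $i = 1$ when $a_1 \ne 0$ and $i = 2$ otherwise verifies the structural hypothesis of that proposition for the singleton. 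Proposition~\ref{prop:mg2} then gives $deg(m,\Pi) = \lfloor \tfrac{-1+\sqrt{1+8m}}{2} \rfloor$, and it remains to see that this coincides with $s_2(m)$; this is the elementary computation flagged after Proposition~\ref{prop:mg2}, carried out by writing $m = \tfrac{t^2+t}{2} + s$ with $0 \le s \le t$, noting $2t+1 \le \sqrt{1+8m} < 2t+3$, and checking that both closed forms equal $t$.

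For the ``furthermore'' clause I would read off from the formula: if $k = +\infty$ then $deg(m,\Pi) = m$ with $m < k$ trivially; if $k$ is finite and $1 \le m \le k-1$ then $1 + \tfrac{8m}{k-1} \le 9$, so $d = 1$ and $s_k(m) = m$, which together with the $m = 0$ case gives $deg(m,\Pi) = m$ for all $m < k$. Conversely, if $k \le 1 < m$ then $deg(m,\Pi) = 0 < m$, and if $k \ge 2$ with $m \ge k$ then $1 + \tfrac{8m}{k-1} > 9$ forces $d \ge 2$ and hence $s_k(m) < m$ --- this strict inequality is the one recorded after Theorem~\ref{thm:magnitude2} (and at $k = 2$ it amounts to $\lfloor \tfrac{-1+\sqrt{1+8m}}{2}\rfloor < m$, valid for $m > 1$). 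The only genuine step, and it is a light one, is the $k = 2$ case: supplying the missing hypothesis of Proposition~\ref{prop:mg2} for a single pattern and reconciling its formula with $s_k$ at $k = 2$; everything else is a direct citation.
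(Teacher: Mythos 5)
Your proposal is correct, and the assembly of the main formula (citing Proposition~\ref{prop:infinitemg}, the Erd\H{o}s--Szekeres remark for $k=0$, Proposition~\ref{prop:mg1}, Theorem~\ref{thm:magnitude2} for $k\ge 3$, and Proposition~\ref{prop:mg2} for $k=2$) is exactly what the paper intends when it says the values ``were determined in the previous claims.'' Two remarks on where you diverge. First, you do something the paper skips: you explicitly verify that a singleton $\{\sigma\}$ with $mg(\sigma)=2$ satisfies the structural hypothesis of Proposition~\ref{prop:mg2} (core $12$, hence $a_1\neq 0$ or $a_2\neq 0$ by Definition~\ref{def:core}, and a single pattern lets you choose $i$ accordingly), and you reconcile the $k=2$ formula with $s_2(m)$; the paper leaves both as implicit or ``straightforward to check.'' Second, for the ``furthermore'' clause the paper deliberately avoids the algebra of $s_k(m)$: it proves $deg(m,\Pi)\ge m$ for $m<k$ by counting permutations with core $\epsilon=12\cdots m$ (all of which have magnitude $m<k$ and hence avoid $\sigma$), and proves $deg(m,\Pi)\le m-1$ for $m\ge k$ by noting that $\epsilon$ is the only permutation with $maj^+\le m$ and length $\ge m$, so Lemma~\ref{lemma:finiteupper} applies with $l=m-1$. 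Your route instead reads the inequalities off the closed form: $m\le k-1$ gives $d=1$ and $s_k(m)=m$, while $m\ge k$ gives $d\ge 2$. The one step you lean on without proof is ``$d\ge 2$ hence $s_k(m)<m$,'' which you attribute to the remark after Theorem~\ref{thm:magnitude2} --- but that remark is itself an unproved ``notice,'' and the corollary's proof is where the paper actually establishes it. The step is true and easy (minimality of $d$ gives $\frac{d(d-1)}{2}(k-1)<m$, hence $d(k-1)<\frac{2m}{d-1}\le 2m$, hence $\frac{(d-1)(k-1)}{2}+\frac{m}{d}<m$), so this is a small omission rather than a gap; the paper's combinatorial bounds simply sidestep the computation entirely.
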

\begin{proof}
Since the values of $deg(m, \Pi)$ were determined in the previous claims, we will only prove the inequalities.
It is easier to use bounds on $deg(m, \Pi)$ than to work with the equations for $s_k(m)$.
Let $\epsilon = 12 \cdots m$.
For $m < k$ every permutation with core~$\epsilon$ avoids $\Pi$ and $M^{[\epsilon]}_n(\Pi) = M^{[\epsilon]}_n(\emptyset) \geq \alpha n^m$ for some constant $\alpha$.
Thus $s_k(m) = deg(m, \Pi) = m$.

On the other hand, for $m \geq k$ every permutation~$\pi$ such that $maj^+(\pi) \leq m$ and $|\pi| \geq m$ contains $12 \cdots k$ (in fact~$\epsilon$ is the only such permutation).
And Lemma~\ref{lemma:finiteupper} implies $s_k(m) = deg(m, \Pi) \leq m-1$.
\end{proof}

Moreover, for an arbitrary set of permutations~$\Pi$ we can use Corollary~\ref{cor:principalclasses} to provide an upper bound on $deg(m, \Pi)$.
Let $\tau \in \Pi$ such that $mg(\tau) = mg(\Pi)$, then $deg(m, \Pi) \leq deg(m, \lbrace \tau \rbrace)$.

%\subsection{Asymptotic probability}

Our previous results in this chapter imply a sharp dichotomy for the probability that a random permutation with a fixed major index avoids a specific set of patterns~$\Pi$.

\begin{theorem}
\label{thm:limitprobability}
Let $\Pi$ be a set of permutations and~$m$ a non-negative integer.
Then
\[\lim_{n \to \infty} \frac{M^m_n(\Pi)}{M^m_n(\emptyset)} = 
\begin{cases}
1 \quad &\mbox{if } m < mg(\Pi) \\
0 \quad &\mbox{otherwise.}
\end{cases}\]
\end{theorem}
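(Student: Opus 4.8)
The plan is to compare $M^m_n(\Pi)$ with $M^m_n(\emptyset)$ core by core. The main ingredient is Proposition~\ref{prop:infinitemg} applied to the empty set of patterns: since $mg(\emptyset)=+\infty$, it yields $M^m_n(\emptyset)=\frac{n^m}{m!}+O(n^{m-1})$, and moreover (from its proof) the single core $\epsilon = 12\cdots m$ supplies the entire leading term $\frac{n^m}{m!}$, while every other core in $C(m,\emptyset)$ has length at most $m-1$ and so contributes only $O(n^{m-1})$. Thus everything reduces to understanding how $\Pi$ interacts with permutations whose core is $\epsilon$.

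For the branch $m<mg(\Pi)$ I would first observe that every permutation $\pi$ with core $\epsilon$ has magnitude exactly $m$: for $m\ge 1$ such a $\pi$ increases up to position~$m$, has its only descent at position~$m$, and increases afterwards, so $mg(\pi)=m$; for $m=0$ it is $12\cdots n$ with $mg(\pi)=0$. Since every $\sigma\in\Pi$ has $mg(\sigma)\ge mg(\Pi)>m$, and containment forces $mg(\pi)\ge mg(\sigma)$ (the remark after Definition~\ref{def:magnitude}), no such $\pi$ can contain a pattern of $\Pi$; hence $M^{[\epsilon]}_n(\Pi)=M^{[\epsilon]}_n(\emptyset)$. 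Then from the sandwich $M^{[\epsilon]}_n(\Pi)\le M^m_n(\Pi)\le M^m_n(\emptyset)$ together with $M^{[\epsilon]}_n(\emptyset)/M^m_n(\emptyset)\to 1$, the squeeze theorem gives the limit~$1$. (When $m=0$ this degenerates to the trivial identity $M^0_n(\Pi)=M^0_n(\emptyset)=1$, since $12\cdots n$ avoids $\Pi$ whenever $mg(\Pi)\ge 1$.)

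For the branch $m\ge mg(\Pi)$ the magnitude $mg(\Pi)$ is necessarily finite, so I would fix $\tau\in\Pi$ with $mg(\tau)=mg(\Pi)=:k\le m$ and use $M^m_n(\Pi)\le M^m_n(\{\tau\})$. The plan is to split on $k$: if $k=0$ then $\tau$ is an increasing pattern and $M^m_n(\{\tau\})=0$ for large~$n$ by the Erdős–Szekeres argument~\cite{Erdos1935} following Theorem~\ref{thm:monotonity}; if $k=1$ then Proposition~\ref{prop:mg1} gives $M^m_n(\{\tau\})=O(1)$; if $k\ge 2$ then $m\ge k$ with $k$ finite, so Corollary~\ref{cor:principalclasses} gives $deg(m,\{\tau\})\le m-1$, i.e.\ $M^m_n(\{\tau\})=O(n^{m-1})$. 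In each case $M^m_n(\Pi)=O(n^{m-1})$ — and is eventually zero when $m=0$ — whereas $M^m_n(\emptyset)$ grows like $\frac{n^m}{m!}$ (or equals $1$ when $m=0$), so the ratio tends to~$0$.

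I do not expect a serious obstacle: the only real content is the magnitude computation for core-$\epsilon$ permutations in the first branch, plus recognizing that Proposition~\ref{prop:infinitemg} applies verbatim to $\Pi=\emptyset$. The points that need care are the degenerate case $m=0$ throughout, and the fact that ``$deg(m,\{\tau\})=0$'' can conceal an identically zero polynomial — which is why in the second branch I would record explicit $O(\cdot)$ bounds on $M^m_n(\{\tau\})$ rather than argue purely in terms of degrees.
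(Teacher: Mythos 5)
Your proposal is correct and follows essentially the same route as the paper: Proposition~\ref{prop:infinitemg} handles the denominator and the core $\epsilon=12\cdots m$ supplies the leading term for the branch $m<mg(\Pi)$, while the branch $m\ge mg(\Pi)$ reduces to the degree bound $deg(m,\Pi)\le deg(m,\{\tau\})<m$ from Corollary~\ref{cor:principalclasses}. Your extra care with the $m=0$ and $mg(\Pi)\le 1$ degeneracies (where $deg=0$ may hide an eventually zero polynomial) is a point the paper glosses over, but it is a refinement of the same argument rather than a different one.
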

\begin{proof}
First notice that directly from Proposition~\ref{prop:infinitemg}, it follows that $M^m_n(\emptyset) = \frac{n^m}{m!} + O(n^{m-1})$.
For $m < mg(\Pi)$ every permutation with core $\epsilon = 12 \cdots m$ avoids~$\Pi$.
As we already showed in the proof of Proposition~\ref{prop:infinitemg}, $M^{[\epsilon]}_n(\Pi) = \frac{n^m}{m!} + O(n^{m-1})$.
Therefore, the ratio is approaching~$1$ as~$n$ goes to infinity.

For $m \geq mg(\Pi)$, we know that $deg(m, \Pi) < m$ (recall Corollary~\ref{cor:principalclasses}).
Therefore, the polynomial in the numerator has smaller degree than the one in the denominator and the ratio is approaching~$0$ as~$n$ goes to infinity.
\end{proof}

\section{Conclusion and further directions}
\label{sec:conclusion}

In Section~\ref{sec:fixed-major}, we proved the monotonicity of the numbers $M^m_n(\sigma)$ for a single pattern~$\sigma$ other than $12 \cdots k$ (recall Theorem~\ref{thm:monotonity}) and showed an example of a set~$\Pi$ for which the monotonicity does not hold even though $M^m_n(\Pi)$ tends to infinity.
The natural question to ask would be whether we can in general characterize such sets~$\Pi$ for which the monotonicity of columns does not hold even though $deg(m, \Pi) \geq 1$.
Based on computing the values $M^m_n(\Pi)$ for small $n$ and various sets~$\Pi$, it seems to us that these cases are rather rare.

%We showed a counterexample to the idea that the columns are either weakly increasing or eventually zero. 
%On the other hand we think that this holds for permutations with a fixed core.
%
%\begin{conjecture}
%For a set of permutations~$\Pi$ and a core~$\gamma$ there is no~$n$ such that $M^{[\gamma]}_n(\Pi) > M^{[\gamma]}_{n+1}(\Pi)$ and $M^{[\gamma]}_{n+1}(\Pi) < M^{[\gamma]}_{n+2}(\Pi)$. 
%\end{conjecture}

In Section~\ref{sec:asymptotics}, we analysed the asymptotic behaviour of the numbers $M^m_n(\Pi)$ for many types of~$\Pi$ in the sense of the degree $deg(m, \Pi)$.
The most natural way to extend this study is to cover the remaining cases.
For example, it remains to be shown whether the sets~$\Pi$ that contain permutations with both finite and infinite magnitude obey any general rules.
Another open problem is to determine exactly for which sets~$\Pi$ the values $M^m_n(\Pi)$ are eventually equal to zero.

One could also focus on generalized pattern avoidance.
A permutation~$\sigma$ contains a copy of a generalized pattern~$\pi$ if it contains $\pi$ and certain elements of the diagram of the copy are adjacent either horizontally or vertically. 
The concept of generalized patterns was introduced by Babson and Steingrímsson~\cite{Babson2000}.
The reason they are interesting is because many statistics on permutations (including the number of inversions and the major index) can be expressed as a linear combination of the number of occurrences of these generalized patterns. 

Finally, similar analysis of the distribution could be done for other permutation statistics like number of descents or number of excedances.
As previously mentioned, the number of inversions was already covered by Claesson, Jelínek and Steingrímsson~\cite{Jelinek2012}. 
One can find examples of various other pattern statistics in a classification given by Babson and Steingrímsson~\cite{Babson2000}.
% is it long enough?

\bibliographystyle{plain}
\bibliography{mybib}

\end{document}